\newcommand{\bL}{\bm L}
\newcommand{\bx}{\bm x}
\newcommand{\bH}{\mathbf H}
\newcommand{\bJ}{\mathbf J}
\newcommand{\jj}{j=1,\cdots,J}
\newcommand{\qq}{q=1,\cdots,Q_j}
\newcommand{\dd}{\mathrm d}
\DeclareMathOperator{\vech}{vech}
\DeclareMathOperator{\st}{s.t.}
\def\Xint#1{\mathchoice
{\XXint\displaystyle\textstyle{#1}}%
{\XXint\textstyle\scriptstyle{#1}}%
{\XXint\scriptstyle\scriptscriptstyle{#1}}%
{\XXint\scriptscriptstyle\scriptscriptstyle{#1}}%
\!\int}
\def\XXint#1#2#3{{\setbox0=\hbox{$#1{#2#3}{\int}$ }
\vcenter{\hbox{$#2#3$ }}\kern-.6\wd0}}
\def\aint{\Xint-}
\begin{document}

\title{{\bf\large An Integrated Quadratic Reconstruction for Finite
    Volume Schemes to Scalar Conservation Laws in Multiple Dimensions}} 

\author[L. Chen et~al.]{Li Chen\affil{1}, Ruo Li\affil{2}\corrauth, Feng Yang\affil{1}}

\emails{{\tt cheney@pku.edu.cn}(Li Chen), {\tt rli@math.pku.edu.cn}(Ruo Li),  {\tt f\_yang@pku.edu.cn}(Feng Yang)}

\address{\affilnum{1}\ School of Mathematical Sciences,
	Peking University, Beijing, China\\
	\affilnum{2}\ HEDPS \& CAPT, LMAM \& 
	School of Mathematical Sciences,
	Peking University, Beijing, China}

\begin{abstract}
  We proposed a piecewise quadratic reconstruction method in
  multiple dimensions, which is in an integrated style, for finite
  volume schemes to scalar conservation laws. This integrated
  quadratic reconstruction is parameter-free and applicable on
  flexible grids. We show that the finite volume schemes with the new
  reconstruction satisfy a local maximum principle with properly setup
  on time steplength. Numerical examples are presented to show that
  the proposed scheme attains a third-order accuracy for smooth
  solutions in both 2D and 3D cases. It is indicated by numerical
  results that the local maximum principle is helpful to prevent
  overshoots in numerical solutions.
\end{abstract}

\keywords{quadratic reconstruction, finite volume method, local maximum principle, scalar conservation law, unstructured mesh}

	
\maketitle
\section{Introduction}

The study of robust, accurate, and efficient finite volume schemes for
conservation laws is an active research area in computational fluid dynamics. It
was noted that higher-order finite volume methods have been shown to be more
efficient than second-order methods \cite{Michalak2009}. The key element in the
reconstruction procedures of high-order schemes is suppressing non-physical
oscillations near discontinuities, while achieving high-order accuracy in smooth
regions. One of the pioneering work in this area is the finite volume scheme
based on the $k$-exact reconstruction, first proposed by Barth and Fredrichson
\cite{Barth1990} and later extended to the cell-centered finite volume scheme by
Mitchell and Walters \cite{Mitchell1993}.  For more recent work on the use of
$k$-exact reconstruction to attain high-order accuracy, we refer the reader to
\cite{Barth1993, Ollivier-Gooch2009, Michalak2009, Li2012, Hu2016} for instance.
The hierarchical reconstruction strategies of Liu \textit{et al.}
\cite{Liu2007} were also used to achieve higher-order accuracy \cite{Xu2011,
Hu2010}, where the information is recomputed level by level from the highest
order terms to the lowest order terms with certain non-oscillatory method.
Other type of high-order finite volume schemes includes the WENO scheme
\cite{Liu1994}.  Although the implementation of WENO scheme is comparatively
complicated on unstructured meshes due to the needs of identifying several
candidate stencils and performing a reconstruction on each stencil
\cite{Li2012}, it has been successfully applied on the unstructured meshes for
both two-dimensional triangulations \cite{Friedrich1998, Hu1999, Dumbser2007,
Dumbser2007a, Titarev2010, Liu2013} and three-dimensional triangulations
\cite{Zhang2009, Tsoutsanis2011}.  Most of these schemes do not lead to a strict
maximum principle, while they are essentially non-oscillatory \cite{Shu2017}.
Actually, the reconstruction procedure for maximum-principle-satisfying
second-order schemes are relatively mature \cite{Barth1989, Durlofsky1992,
Liu1993, Batten1996, Hubbard1999, Park2010}, while there are few
maximum-principle-satisfying reconstruction approaches for higher-order finite
volume schemes on unstructured meshes.

Limiting to scalar conservation laws, a quadratic reconstruction for finite
volume schemes applicable on 2D and 3D unstructured meshes is developed in this
paper. The construction is a further exploration of the integrated linear
reconstruction (ILR) in \cite{Chen2016, Chen2018}, where the coefficients of
reconstructed polynomial are embedded in an optimization problem. It is
appealing for us to generalize the optimization-based constructions therein to
an integrated quadratic reconstruction (IQR) such that the scheme achieves a
third-order accuracy while satisfying a local maximum principle. It was pointed
out in \cite{Zhang2011a} that the scheme satisfying the standard local maximum
principle is at most second-order accurate around extrema. To achieve higher
than second-order accuracy, high-order information of the exact solution has to
be taken into account in the definition of local maximum principle
\cite{Zhang2011a, Xu2017}. Sanders \cite{Sanders1988} suggested to measure the
total variation of approximation polynomials. Liu \textit{et al.} \cite{Liu1996}
constructed a third-order non-oscillatory scheme by controlling the number of
extrema and the range of the reconstructed polynomials. Zhang \textit{et al.}
constructed a genuinely high-order maximum-principle-satisfying finite volume
schemes for multi-dimensional nonlinear scalar conservation laws on both
rectangular meshes \cite{Zhang2010a} and triangular meshes \cite{Zhang2012} by
limiting the reconstructed polynomials around cell averages. The flux limiting
technique developed by Christlieb \textit{et al.} \cite{Christlieb2015} is
another family of maximum-principle-satisfying methods on unstructured meshes.
In our scheme, it is proposed that the extrema of numerical solutions are
measured by extrema of polynomial on a cluster of points, following the
technique of Zhang \textit{et al.} \cite{Zhang2010a, Zhang2012}. To overcome the
difficulty of loss of high-order information in cell averages, besides the cell
averages at current time level, we utilize the reconstruction polynomials at
previous time step. This idea is based on the wave propagation nature of
conservation laws. Since the solution value at $({\bf x}, t)$ can be tracked
back to a point in the ball around ${\bf x}$ with radius as $v \Delta t$ at time
$t - \Delta t$, while $v$ is local wave speed, it is reasonable for us to use
the value in this ball at previous time step in the reconstruction. It is shown
that this may lead to a third-order numerical scheme, meanwhile a local maximum
principle is satisfied. An advantage of the new reconstruction is that there is
no artificial parameter at all, which makes the algorithm robust and independent
of the problem.

The rest of the paper is organized as follows. In Section 2, we describe the
integrated quadratic reconstruction based on solving a series of quadratic
programming problems. Section 3 is devoted to the discussion of the order of
accuracy and maximum principle for scalar conservation laws.  Numerical results
are given to demonstrate the stability and accuracy of the proposed scheme in
Section 4.  Finally, a short conclusion is drawn in Section 5.


\section{Numerical Scheme}

Let us consider a \emph{scalar} hyperbolic conservation law on a
$d$-dimensional domain $\Omega$, $d = 2, 3$, as
\begin{equation}
  \dfrac{\partial u}{\partial t}+\nabla\cdot \bm F(u)= 0,
  \label{eq:hcl}
\end{equation}
together with appropriate boundary condition and initial value $u(\cdot,0)$. The
computational domain $\Omega$ is triangulated into a grid, either structured or
unstructured, denoted by $\mathcal{T}$.  For an arbitrary cell $T_0 \in \mathcal
T$ referred as a control volume for finite volume method, let $e_j$ be the facet
of $T_0$ shared by $T_0$ and its von Neumann neighbor $T_j$, and $\bm n_j$ be
the unit outer normal of $e_j$~$(\jj)$. The finite volume discretization for
\eqref{eq:hcl} is then formulated as
\begin{equation}
\dfrac{u_0^{n+1}-u_0^{n}}{\Delta t_n} + \dfrac1{|T_0|} \sum_{j=1}^J\sum_{q=1}^{Q_j}
w_{jq}\mathcal F(v_{h,0}^n(\bm z_{jq}),v_{h,j}^n(\bm z_{jq});\bm n_{j}) |e_{j}| =  0.
\label{eq:fvm-euler}
\end{equation}
Here $u_0^{n}$ approximates the cell average of the solution $u$ on $T_0$ at
$n$-th time level $t_n$, i.e.
\[
u_0^n \approx \left. \Pi u(\cdot, t_n) \right|_{T_0},
\]
where $\Pi$ is the piecewise constant projection defined by
\[
\left. \Pi w \right|_{T_0} = \aint_{T_0} w(\bm x)\mathrm
d\bm x := \dfrac{1}{|T_0|}\int_{T_0} w(\bm x)\mathrm d\bm x, \quad
\forall w \in L^1(\Omega).
\] 
The point $\bm z_{jq}$ is the $q$-th quadrature point on the facet $e_j$ with
weight $w_{jq}$~($\qq,\jj$), the function $v_{h,0}^n(\bm x)$ is a reconstructed
polynomial computed from the patch of cell $T_0$, the function
$v_{h,j}^n(\bm x)$ ($j=1, \cdots, J$) is a reconstructed
polynomial computed from the patch of cell $T_j$, and $\mathcal F(u,v;\bm n)$ is
a numerical flux, such as the Lax-Friedrichs flux
\begin{equation}
\mathcal F(u,v;\bm n)
=\dfrac{1}{2}(\bm F(u)+\bm F(v))\cdot\bm n-\dfrac{1}{2}a(v-u),
\label{eq:lfflux}
\end{equation}
where $a=\sup\limits_{u,\bm n} |\bm F'(u)\cdot \bm n|$ represents the maximal
characteristic speed.

Denote the piecewise constant approximation of $u(\bx, t_n)$ to be $u_h^n(\bx)$,
which takes $u_0^n$ as its value on $T_0$.  In this paper we will focus on
constructing a quadratic polynomial $v_{h,0}^n(\bx)$ on each control volume $T_0$.
And the resulting piecewise quadratic function on the whole domain $\Omega$ is
denoted by $v_h^n(\bx)$.  Classical patch reconstruction algorithms in the
literature directly give $v_h^n(\bx)$ from $u_h^n(\bx)$, while the integrated
quadratic reconstruction requires additional information.  Precisely, we may
formulate our reconstruction as an operator $\mathcal{R}_h$ 
\[
  v_h^n(\bx) = \mathcal{R}_h [u_h^n, v_h^{n-1} ] (\bx).
\]
That is to say, the function $v_h^n$ depends on not only its piecewise constant
counterpart $u_h^n$, but also the previous reconstruction $v_h^{n-1}$.
Basically, the operator $\mathcal{R}_h$ accepts two functions as its arguments:
the first function is a piecewise constant function on $\mathcal{T}$, and the
second function is a piecewise continuous function on $\mathcal{T}$.  With the
introduction of the operator $\mathcal{R}_h$, the numerical scheme
\eqref{eq:fvm-euler} can be formally identified as
\begin{equation}
u_h^{n+1}=u_h^{n}+\Delta t_n \mathcal L(v_h^{n}), \quad
v_h^{n}=\mathcal{R}_h [u_h^n,v_h^{n-1}],
\label{eq:firstorderscheme}
\end{equation}
where $\mathcal L$ is the operator defined through \eqref{eq:fvm-euler}.

For the initial level $n=0$, we directly take $u_h^0(\bx)$ to be the piecewise
constant projection of $u(\bx,0)$ on $\mathcal{T}$ and $v_h^{-1}(\bx) =
u(\bx,0)$, saying
\begin{equation}
v_h^0(\bx) = \mathcal{R}_h [\Pi u(\cdot,0), u(\cdot,0) ] (\bx),
\label{eq:initial-value}
\end{equation}
to bootstrap the computation. And we note that \eqref{eq:initial-value} actually
defines a mapping from a continuous function $w \in C(\Omega) \bigcap
L^1(\Omega)$ to a piecewise quadratic function on $\mathcal{T}$.  We denote this
mapping again by $\mathcal{R}_h$ 
\[
\mathcal{R}_h [w](\bx) := \mathcal{R}_h[\Pi w, w](\bx),
\]
for convenience. Therefore, we need only to specify $\mathcal{R}_h$ to close the
scheme \eqref{eq:fvm-euler}. Below we describe the procedure to specify
$v_h^n(\bx)$ on a single cell $T_0$ using $u_h^n(\bx)$ and $v_h^{n-1}(\bx)$. 

The reconstructed quadratic function on cell $T_0$ can be formulated as
\begin{equation}
v_{h,0}^{n}(\bm x)=u_0^{n}+\bL\cdot(\bm x-\bm x_0)
+
\dfrac{1}{2}\bH:((\bm x-\bm x_0)\otimes(\bm x-\bm x_0)
-\bJ_0),
\label{eq:recon}
\end{equation}
where the operator $\otimes$ denotes the tensor product of vectors,
and the operator $:$ denotes the inner-product of high order tensor.
The vector $\bL$ and the matrix $\bH$ are
\[
\bL=
\begin{bmatrix}
L_1 \\ L_2 \\ \cdots \\ L_d
\end{bmatrix}\quad \text{~and~} \quad
\bH=
\begin{bmatrix}
H_{11} & H_{12} & \cdots &H_{1d} \\
H_{21} & H_{22} & \cdots &H_{2d} \\
\vdots & \vdots & \ddots & \vdots \\
H_{d1} & H_{d2} & \cdots &H_{dd} 
\end{bmatrix},
\]
approximates respectively the gradient $\nabla u$ and the Hessian
$\nabla^2 u$ near the centroid $\bx_0$ of cell $T_0$, and $\bJ_0$
represents the second moments of the cell $T_0$
\[
\bJ_0=\aint_{T_0} (\bm x-\bm x_0)\otimes(\bm x-\bm x_0)\mathrm d\bm x,
\]
which depends on the geometry of control volume $T_0$ only. In Table
\ref{tab:sms} we list the second moments of several geometric shapes widely used
in the mesh triangulation.  Note that the quadratic polynomial 
\eqref{eq:recon} automatically satisfies the conservation property, i.e.
\begin{equation*}
\aint_{T_0} v_{h,0}^{n}(\bm x)\mathrm d\bm x=u_0^{n}.
\end{equation*}

\begin{table}[htbp]
   	\centering	
   	\caption{Geometry parameters of several control volumes.}
   	\label{tab:sms}	
   \begin{threeparttable}
   	\scriptsize
	\begin{tabular}{lcll}
		\toprule
		& \bf Geometry \tnote{a} & \bf Second moments \tnote{b} 
        & $\nu$\\
		\midrule
		\bf Rectangle &
		\includegraphics[valign=M]{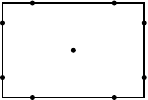} &
		$
		\displaystyle
		\mathbf J=
		\dfrac{1}{12}
		\begin{bmatrix}
		{l_x^2} & 0\\
		0 & {l_y^2}
		\end{bmatrix}
		$
		& $\dfrac{1}{16}$
		\\[10mm]
		\bf Triangle &
		\includegraphics[valign=M]{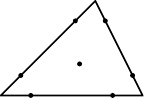} &
		$\displaystyle
		\bJ=\dfrac{1}{36}\sum_{1\le i<j\le 3}
		\overrightarrow{P_iP_j}\otimes\overrightarrow{P_iP_j}
		$
        & $\dfrac{1}{12}$
		\\[10mm]
		\bf Cuboid &
		\includegraphics[valign=M]{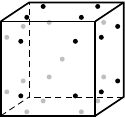} &
		$
		\displaystyle
		\mathbf J=
		\dfrac{1}{12}
		\begin{bmatrix}
		{l_x^2} & 0& 0\\
		0 & {l_y^2} & 0\\
		0 & 0 & {l_z^2}
		\end{bmatrix}
		$
        & $\dfrac{1}{30}$
		\\[10mm]
		\bf Tetrahedron &
		\includegraphics[valign=M]{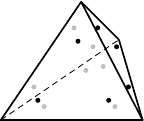}   &
		$\displaystyle
		\bJ=\dfrac{1}{80}\sum_{1\le i<j\le 4}
		\overrightarrow{P_iP_j}\otimes\overrightarrow{P_iP_j}
		$
        & $\dfrac{1}{20}$
		\\ \bottomrule
	\end{tabular}
	\begin{tablenotes}
		\scriptsize
          \item[a] For segmental or rectangular facets, the quadrature points
          are the Gaussian points, while for triangular facets, the barycentric
          coordinates of three quadrature points are $(2/3,1/6,1/6)$,
          $(1/6,2/3,1/6)$ and $(1/6,1/6,2/3)$ respectively.
          \item[b] $l$'s denote the dimensions of the control volume, and
          $P_i$'s denote the vertices of the control volume.
	\end{tablenotes}
\end{threeparttable}   
\end{table}

To suppress numerical oscillations we follow the same basic outline as
traditional second-order limiters, namely, limiting the values on the quadrature
points $\{\bm z_{jq}\}$.  Note that higher-order reconstructions admit local
extrema within cells, in contrast to linear reconstructions.  Therefore, to
improve the restriction within cells, we also examine the value at the centroid
$\bx_0$.  For convenience, we define a cluster of collocation points associated
to a given cell $T_0$ by
\[
Z_0=\{\bm z_{jq}|\qq,\jj\}\cup \{\bx_0\},
\]
which consists of all quadrature points on the cell faces along with the
centroid of the whole cell (see the geometry column in Table \ref{tab:sms}).
Now introduce an objective function depending on the parameters 
$\bL$ and $\bH$ in the expression \eqref{eq:recon} of $v_{h,0}^n(\bx)$ as
\begin{equation}
\delta (\bL,\bH)=\sum_{i\in S} (\overline u_i^{n}-u_i^{n})^2,\quad 
\overline u_i^{n}=\aint_{T_i} v_{h,0}^{n}(\bx)\mathrm d\bx,
\label{eq:objfun}
\end{equation}
which is the sum of squared residuals of mean values of $v_{h,0}^n$ from $u_h^n$ on
the \emph{Moore neighbors} $\{T_i\}_{i\in S}$, namely, those cells sharing at
least one common vertex with $T_0$ (see Fig. \ref{fig:labeln} for several
examples).  Now we are ready to raise the following optimization problem:
\[
\begin{array}{rl}
\min & \delta (\bL,\bH) \\
\mathrm{s.t.} & \text{\eqref{eq:cons} is fulfilled.}
\end{array}
\]
The constraints are some double inequality constraints on the cluster $Z_0$
\begin{subequations}
\begin{equation}
m_{0j}^{n}\le v_{h,0}^{n}(\bm z_{jq})\le M_{0j}^{n},\quad \qq,\jj,
\end{equation}
\begin{equation}
m_{00}^{n}\le v_{h,0}^{n}(\bx_0)\le M_{00}^{n},
\hphantom{\quad \jj,\qq,}
\end{equation}
\label{eq:cons}
\end{subequations}
and the lower and upper bounds in these inequalities are given by
\begin{subequations}
\begin{equation}
\begin{split}
&m_{0j}^{n}=\min\left\{\min_{\bm z\in Z_0}{v^{n-1}_{h,0}(\bm z)},
                \min_{\bm z\in Z_j}{v^{n-1}_{h,j}(\bm z)},
                u_0^{n},u_j^{n}\right\},\\
&M_{0j}^{n}=\max\left\{\max_{\bm z\in Z_0}{v^{n-1}_{h,0}(\bm z)},
               \max_{\bm z\in Z_j}{v^{n-1}_{h,j}(\bm z)},
                u_0^{n},u_j^{n}\right\},\quad \jj,
\end{split}
\end{equation}            
\begin{equation}          
m_{00}^{n}=\min\left\{\min_{\bm z\in Z_0}{v^{n-1}_{h,0}(\bm
    z)},u_0^{n}\right\}, \quad
M_{00}^{n}=\max\left\{\max_{\bm z\in Z_0}{v^{n-1}_{h,0}(\bm 
    z)},u_0^{n}\right\}.
\end{equation}
\label{eq:bounds}
\end{subequations}

\begin{remark}
  It is clear that (\ref{eq:cons}a) is to restrict the value on the cell face
  and (\ref{eq:cons}b) is to restrict the value in the interior of the cell.
  The expression \eqref{eq:bounds} is a prediction based on the wave propagation
  nature for scalar conservation laws.
\end{remark}
\begin{remark}
  A simple observation is that $\mathcal R_h[u]=u$ if $u$ is a quadratic
  polynomial.  Indeed, the linear and quadratic coefficients of $u$ would
  definitely minimize the objective function \eqref{eq:objfun} and satisfy all
  the constraints \eqref{eq:cons}.
\end{remark}

Now we express the optimization problem in
a compact form. Rewrite \eqref{eq:recon} as
\[
\begin{split}
v^{n}_{h,0}(\bm x)&=u_0^{n}+(\bx-\bx_i+\bx_i-\bx_0)\cdot\bL+\dfrac{1}{2}
((\bx_i-\bx_0)\otimes(\bx_i-\bx_0)+(\bx-\bx_i)\otimes(\bx-\bx_i)\\
&+(\bx-\bx_i)\otimes(\bx_i-\bx_0)
+(\bx_i-\bx_0)\otimes(\bx-\bx_i)
-\bJ_0):\bH,
\end{split}
\]
then the integral average of $v_{h,0}^{n}$ on the cell $T_i$ is found to be
\[
\overline u_i^{n}=\aint_{T_i} v_{h,0}^{n}(\bm x)\mathrm d\bx 
=u_0^{n}+\bm r_i\cdot\bL
+\dfrac{1}{2}(\bm r_i\otimes\bm r_i+\bJ_i-\bJ_0):\bH,
\]
where $\bm r_i=\bm x_i-\bm x_0$.  Denote the \emph{half-vectorization} of a
symmetric matrix $\mathbf A=(A_{ij})_{d\times d}$ by vectorizing its lower
triangular part, namely,
\[
\begin{array}{rccccccl}
\vech(\mathbf A)=\big[&A_{11},&A_{21},&\cdots,&A_{d1},&&\\
&&A_{22},&\cdots,&A_{d2},&&\\
&&&&\cdots,&&\\
&&&& A_{dd}&\big]^\top\in\mathbb R^{d(d+1)/2}.&
\end{array}
\]
Then we have the following compact form for $\overline u_i^n$:
\[
  \overline u_i^{n}= u_0^{n}+\bm s_i^\top\bm\varphi,
\]
where the vectors
\begin{equation}
\bm s_i=
\begin{bmatrix}
\bm r_i/h \\ \vech(\bm r_i\otimes \bm r_i+\bJ_i-\bJ_0)/h^2
\end{bmatrix},
\label{eq:rs}
\end{equation}
and
\[
 \begin{array}{rccccccl}
 \bm\varphi=\big[&hL_1,&hL_2,&\cdots,&hL_d,&&\\
 &h^2H_{11}/2,&h^2H_{21},&\cdots,&h^2H_{d1},&&\\
 &&h^2H_{22}/2,&\cdots,&h^2H_{d2},&&\\
 &&&&\cdots,&&\\
 &&&& h^2H_{dd}/2&\big]^\top\in\mathbb R^{d(d+3)/2}.&
 \end{array}
 \]
Here $h$ is a reference length, such as the mesh size of current cell.
Inserting the compact form of $\overline u_i^n$ into the objective function
\eqref{eq:objfun} yields
\[
\begin{split}
\delta&=\sum_{i\in S}\left(u_0^{n}-u_i^{n}+
\bm s_i^\top\bm\varphi
\right)^2 \\
&=\sum_{i\in S} \left((u_i^{n}-u_0^{n})^2-2(u_i^{n}-u_0^{n})
\bm s_i^\top\bm\varphi
+\bm\varphi^\top
\bm s_i\bm s_i^\top\bm\varphi
\right) \\
& = \bm\varphi^\top\mathbf G\bm\varphi+2\bm c^\top\bm\varphi+\text{const},
\end{split}
\]
where
\begin{equation}
\mathbf G=
\sum_{i\in S}\bm s_i\bm s_i^\top\quad\text{~and~}\quad 
\bm c=
-\sum_{i\in S}
(u_i^{n}-u_0^{n})\bm s_i.
\label{eq:coeff1}
\end{equation}

The constraints \eqref{eq:cons} can also be formulated in a compact form, namely
\begin{equation*}
\begin{split}
&m_{0j}^{n}\le v_{h,0}^{n}(\bm z_{jq}) =u_0^{n}+
\bm a(\bm z_{jq})^\top\bm\varphi\le M_{0j}^{n},\quad \qq,\jj,\\
&m_{00}^{n}\le v_{h,0}^{n}(\bx_0)=u_0^{n}+
\bm a(\bx_0)^\top\bm\varphi\le M_{00}^{n},
\end{split}
\end{equation*}
where $\bm a(\bx)$ is a vector-valued function defined by
\begin{equation}
  \bm a(\bx)=
  \begin{bmatrix}
    (\bx-\bx_0)/h \\
    \vech((\bx-\bx_0)\otimes(\bx-\bx_0) -\bJ_0)/h^2
  \end{bmatrix}.
  \label{eq:coeff2}
\end{equation}
Next we introduce the matrix notations
\begin{equation}
  \mathbf A=
  \begin{bmatrix}
    \bm a(\bx_0)^\top \\
    \bm a(\bm z_{11})^\top \\
    \vdots \\
    \bm a(\bm z_{1Q_1})^\top \\
    \vdots \\
    \bm a(\bm z_{J1})^\top \\
    \vdots \\
    \bm a(\bm z_{JQ_J})^\top 
  \end{bmatrix}, \quad
  \bm b = 
  \begin{bmatrix}
    m_{00}^n-u_0^n \\
    m_{01}^n-u_0^n \\
    \vdots \\
    m_{01}^n-u_0^n \\
    \vdots \\
    m_{0J}^n-u_0^n \\
    \vdots \\
    m_{0J}^n-u_0^n 
  \end{bmatrix}, \quad
  \bm B = 
  \begin{bmatrix}
    M_{00}^n-u_0^n \\
    M_{01}^n-u_0^n \\
    \vdots \\
    M_{01}^n-u_0^n \\
    \vdots \\
    M_{0J}^n-u_0^n \\
    \vdots \\
    M_{0J}^n-u_0^n 
  \end{bmatrix}. 
  \label{eq:AbB}
\end{equation}
Then the optimization problem above will be reduced to a  double-inequality
constrained \emph{quadratic programming problem} for variables $\bm\varphi$
\begin{equation}
\begin{split}
\min~&\dfrac{1}{2}\bm\varphi^\top\mathbf G\bm\varphi+\bm c^{\top}\bm\varphi\\
\st~ & \bm b\le \mathbf A\bm{\varphi}\le \bm B,
\end{split}
\label{eq:mainqp}
\end{equation}
where the coefficients are specified in \eqref{eq:bounds} -- \eqref{eq:AbB}.

The matrix $\mathbf G$ depends only on the geometry of the neighborhood of
$T_0$. For most of the cases the matrix $\mathbf G$ is positive-definite, and
hence the problem \eqref{eq:mainqp} becomes strictly convex. Moreover, the
feasible region is non-empty since the null solution $\bm\varphi=\bm 0$ is
always feasible. As a result, the global solution of problem \eqref{eq:mainqp}
exists uniquely, which we represent as
\begin{equation}
\bm{\varphi}=\mathcal Q(\mathbf G,\bm c,\mathbf A,\bm b,\bm B).
\label{eq:defQ}
\end{equation}
The operator $\mathcal{R}_h$ can thereby be defined through the reconstructed
quadratic polynomial as
\[
v_{h,0}^{n}(\bx)=\mathcal{R}_h[u_h^n, v_h^{n-1}](\bx) :=
u_0^{n}+\bm\varphi^\top\bm a(\bx), \quad \forall \bx \in T_0.
\]

\begin{remark}
  If we drop the constraints in the problem \eqref{eq:mainqp}, the resulting
  reconstruction becomes the $k$-exact reconstruction with $k=2$.  The
  corresponding solution is simply 
	\begin{equation}
  \bm{\varphi}_{\mathrm{ls}} =-\mathbf G^{-1}\bm c.
	\label{eq:phils}
	\end{equation}
\end{remark}

\begin{figure}[htbp]
	\centering
	\subfloat[Rectangular mesh]{\includegraphics[width=.22\textwidth]{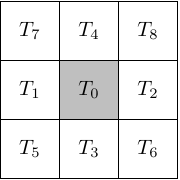}}
	\quad
	\subfloat[Triangular mesh I]{\includegraphics[width=.24\textwidth]{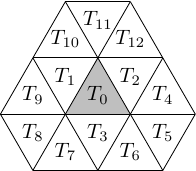}}
	\quad
	\subfloat[Triangular mesh II]{\includegraphics[width=.22\textwidth]{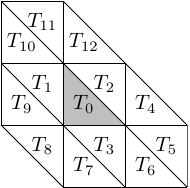}}
	\caption{Labels of Moore neighbors.}
	\label{fig:labeln}
\end{figure}

\begin{example}[rectangular meshes]
  Next we consider a square mesh with spacing $h$ (Fig. \ref{fig:labeln}(a)).
  The quadratic profile here takes the form
	\[
	\begin{split}
	v_{h,0}^n(x,y)&=u_0+L_x(x-x_0)+L_y(y-y_0)+
	\dfrac{1}{2}H_{xx}(x-x_0)^2+\dfrac{1}{2}H_{yy}(y-y_0)^2\\
	&+H_{xy}(x-x_0)(y-y_0)-\dfrac{1}{24}(H_{xx}+H_{yy})h^2.
	\end{split}
	\]			
  The optimal variables are
  $\bm{\varphi}=[hL_x,hL_y,h^2H_{xx}/2,h^2H_{xy},h^2H_{yy}/2]^\top$.  And the
  coefficients of the objective function are 
	\[
	\mathbf G=
	\begin{bmatrix}
	6 & & & &  \\
	& 6 & & &  \\
	& & 6 & &  4 \\
	& & & 4 & \\
	& & 4 &  & 6 
	\end{bmatrix}\quad\text{~and~}\quad
	\bm c=-
	\begin{bmatrix}
	(u_2^n-u_1^n)+(u_6^n-u_5^n)+(u_8^n-u_7^n)\\
	(u_4^n-u_3^n)+(u_7^n-u_5^n)+(u_8^n-u_6^n)\\
	u_1^n+u_2^n+u_5^n+u_6^n+u_7^n+u_8^n-6u_0^n\\
	(u_5^n+u_8^n)-(u_6^n+u_7^n)\\
	u_3^n+u_4^n+u_5^n+u_6^n+u_7^n+u_8^n-6u_0^n
	\end{bmatrix}.
	\]
\end{example}

\begin{example}[triangular meshes]
  Here we use two special cases to illustrate the triangular meshes: the
  equilateral triangular mesh (Fig. \ref{fig:labeln}(b)) and the diagonal
  triangular mesh (Fig. \ref{fig:labeln}(c)).  Choose $h$ to be the minimum side
  of the triangles.  A simple calculation yields the following expression of
  $\mathbf G$:
	\[
	\renewcommand{\arraystretch}{.75}   	 
	\dfrac{1}{24}
	\begin{bmatrix}
	132 & 0 & 0 & -14\sqrt{3} &  0 \\
	0 & 132 & -14\sqrt{3} & 0 & 14\sqrt{3} \\
	0 & -14\sqrt{3} & 105 & 0 & 35 \\
	-14\sqrt{3}& 0 & 0 & 35 & 0 \\
	0 & 14\sqrt{3} & 35 &  0 & 105 
	\end{bmatrix}
	\quad\text{~and~}\quad
	\renewcommand{\arraystretch}{1}   	 
	\dfrac{1}{9}
	\begin{bmatrix}
	66 & -33 & 14 & -7 &  -7 \\
	-33 & 66 & -7 & -7 & 14 \\
	14 & -7 & 70 & -35 & 35 \\
	-7 & -7 & -35 & 35 & -35 \\
	-7 & 14 & 35 &  -35 & 70 
	\end{bmatrix}.
	\]
  From this example we can expect that the matrix $\mathbf G$ is far from singularity
  for most triangular meshes. 
\end{example}

The main computation cost of the integrated quadratic reconstruction is the
solution of quadratic programming problems \eqref{eq:defQ}. An efficient and
robust quadratic programming solver becomes essential.  Here we use the
\emph{active-set method} \cite{Nocedal2006} to solve the problem
\eqref{eq:mainqp}. This method updates the solution by solving a series of
quadratic programming problems in which some of the inequalities are imposed as
equalities. We repeatedly estimate the active set until the solution reaches
optimality. To be more specific, let $\bm\varphi_k$ be solution of the $k$-th
iterative step, then the descending direction $\bm p_k$ and Lagrange
multipliers $\bm\lambda_k$ can be found by successively solving the following
two linear systems
\begin{align*}
(\mathbf M\mathbf G^{-1}\mathbf M^\top)\bm\lambda_k&=
\mathbf M(\bm\varphi_k+\mathbf G^{-1}\bm c),\\
\mathbf G\bm p_k&=\mathbf M^\top \bm\lambda-\mathbf G\bm\varphi_k-\bm c,
\end{align*} 
where the rows of matrix $\mathbf M$ are composed of normals of the active
constraints at the current step.  For a nonzero descending direction $\bm p_k$,
we set $\bm\varphi_{k+1}=\bm\varphi_k+\alpha_k\bm p_k$.  The step-length
parameter $\alpha_k$ is given by
\[
\alpha_k:=\min\left\{1,\min_{l}
\beta_l\right\},\quad 
\beta_l
=
\begin{cases}
\dfrac{b_l-\bm a_l^\top\bm{\varphi}_k}{\bm a_l^\top\bm p_k},
& \bm a_l^\top\bm p_k < 0, \\
\dfrac{B_l-\bm a_l^\top\bm{\varphi}_k}{\bm a_l^\top\bm p_k},
& \bm a_l^\top\bm p_k > 0, \\
+\infty, & \bm a_l^\top\bm p_k=0,
\end{cases}
\]
where $\bm a_l^\top$, $b_l$ and $B_l$ represent the $l$-th rows of matrices
$\mathbf A$, $\bm b$ and $\bm B$ respectively.  On the other hand, if $\bm
p_k=\bm 0$, then we check the signs of Lagrange multipliers.  We have achieved
the optimality if all the multipliers are non-negative; otherwise, we can find a
feasible direction by dropping the constraint with the most negative multiplier.
The initial guess is simply taken as the null solution, i.e.  $\bm\varphi_0=\bm
0$.  

With the operator $\mathcal{R}_h$ specified by the procedure above, the
numerical scheme \eqref{eq:firstorderscheme} is then closed, while it leads to
only first-order temporal accuracy.  To match the third-order spatial accuracy,
we adopt the SSP Runge-Kutta methods \cite{Gottlieb2001}, which is a multi-stage
combination of \eqref{eq:firstorderscheme}. The whole discretization scheme then
reads
\[
\begin{cases}
  u_h^{*}=u_h^{n}+\Delta t_n\mathcal L(v_h^{*}), &\quad 
  v_h^{*}=\mathcal{R}_h [u_h^{n},v_h^{n-1}],\\
  u_h^{**}=\dfrac{3}{4}u_h^{n}+\dfrac{1}{4}\left(u_h^{*}+\Delta
    t_n\mathcal L(v_h^{**})\right), &\quad 
  v_h^{**}=\mathcal{R}_h [u_h^{*},v_h^{*}],\\ 
  u_h^{n+1}=\dfrac{1}{3}u_h^{n}+\dfrac{2}{3}\left(u_h^{**} +\Delta
    t_n\mathcal L(v_h^{n})\right), &\quad 
  v_h^{n}= \mathcal{R}_h [u_h^{**},v_h^{**}].
\end{cases}
\]
And the initial value of the SSP Runge-Kutta method is still given by
\eqref{eq:initial-value}. 


\section{Accuracy and Stability}

In this section we study the accuracy and stability of the proposed scheme.
Roughly speaking, the third-order temporal accuracy is provided by the SSP
Runge-Kutta scheme already, thus we require a third-order spatial accuracy to
achieve an overall third-order accuracy in the truncation error. 

Basically, it can be shown that the quadratic reconstruction proposed above
provides us a third-order spatial accuracy for smooth functions.  To justify
this point, we need to study the asymptotic behavior of the quadratic
programming problem used to define the operator $\mathcal{R}_h$. In fact, let
$\mathcal P_h=\{T_{i}\}_{i\in S}$ be a family of cell patches, where the
relative position of the cells are the same, and hence $\bm r_{i}\propto
h$, $\bJ_{i}\propto h^2$, etc.  For the sake of convenience, the position of
centroid of $T_0$ is fixed.  To derive the continuous limit of problem
\eqref{eq:mainqp}, we first study the asymptotic expansions of the coefficients.
Obvious both $\mathbf G$ and $\mathbf A$ are scale-invariant.  Concerning the
other coefficients, we have the following lemma:
\begin{lemma}\label{lmm:1}
	There exist scale-invariant tensors 
	$\overline{\mathbf c}\in\mathbb R^{d\times d}$,
	$\overline{\overline{\mathbf c}}\in \mathbb R^{d\times d\times d}$,
	$\overline{\mathbf b},\overline{\mathbf B}\in
	\mathbb R^{(JQ+1)\times d}$ and
	$\overline{\overline{\mathbf b}},
	\overline{\overline{\mathbf B}}
	\in\mathbb R^{(JQ+1)\times d\times d}$ 
	such that the following asymptotic expansions hold
	\begin{subequations}
	\begin{align} 
	\bm c&=h\overline{\mathbf c}\cdot\nabla u(\bx_0)
	+h^2\overline{\overline{\mathbf c}}:\nabla \nabla u(\bx_0)
	+\mathcal O(h^3),\\
	\bm b&=h\overline{\mathbf b}\cdot\nabla u(\bx_0)
	+h^2\overline{\overline{\mathbf b}}:\nabla \nabla u(\bx_0)
	+\mathcal O(h^3),\\
	\bm B&=h\overline{\mathbf B}\cdot\nabla u(\bx_0)
	+h^2\overline{\overline{\mathbf B}}:\nabla \nabla u(\bx_0)
	+\mathcal O(h^3).
	\end{align}	\label{eq:asymexpan}
	\end{subequations}
\end{lemma}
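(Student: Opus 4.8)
The plan is to expand every data-dependent coefficient in powers of $h$ by Taylor-expanding around the centroid $\bx_0$, and then to read off the scale-invariant tensors directly from the $\mathcal O(h)$ and $\mathcal O(h^2)$ terms. Two ingredients do all the work. First, integrating the pointwise Taylor expansion of $u$ over a cell and using that $\bm x_i$ is the centroid of $T_i$ (so its first moment vanishes) together with the definition of $\bJ_i$ gives the cell-average expansion
\[
u_i^{n}=\aint_{T_i}u\,\dd\bm x=u(\bx_0)+\nabla u(\bx_0)\cdot\bm r_i
+\tfrac12\nabla\nabla u(\bx_0):(\bm r_i\otimes\bm r_i+\bJ_i)+\mathcal O(h^3).
\]
Second, because $\mathcal R_h$ reproduces quadratic polynomials exactly (as noted above), on smooth data the previous reconstruction matches $u$ to third order, i.e.\ $v_0^{n-1}(\bm z)=u(\bm z)+\mathcal O(h^3)$ and $v_j^{n-1}(\bm z)=u(\bm z)+\mathcal O(h^3)$. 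Throughout I use $\bm r_i\propto h$ and $\bJ_i,\bJ_0\propto h^2$, so that $\hat{\bm r}_i:=\bm r_i/h$ and $\hat{\bJ}_i:=\bJ_i/h^2$ are scale-invariant, and hence so are the vectors $\bm s_i$ in \eqref{eq:rs} and $\bm a(\bm z)$ in \eqref{eq:coeff2}.

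For $\bm c$ the computation is direct. Subtracting the $i=0$ expansion and using $\aint_{T_0}(\bm x-\bx_0)\,\dd\bm x=\bm 0$ gives
\[
u_i^{n}-u_0^{n}
=\nabla u(\bx_0)\cdot\bm r_i
+\tfrac12\nabla\nabla u(\bx_0):(\bm r_i\otimes\bm r_i+\bJ_i-\bJ_0)
+\mathcal O(h^3),
\]
and inserting this into $\bm c=-\sum_{i\in S}(u_i^{n}-u_0^{n})\bm s_i$ from \eqref{eq:coeff1}, then factoring out the powers of $h$, yields
\[
\bm c=-h\sum_{i\in S}(\hat{\bm r}_i\cdot\nabla u(\bx_0))\,\bm s_i
-\tfrac{h^2}{2}\sum_{i\in S}\bigl((\hat{\bm r}_i\otimes\hat{\bm r}_i+\hat{\bJ}_i-\hat{\bJ}_0):\nabla\nabla u(\bx_0)\bigr)\bm s_i+\mathcal O(h^3).
\]
One then simply sets $\overline{\mathbf c}:=-\sum_{i\in S}\bm s_i\otimes\hat{\bm r}_i$ and $\overline{\overline{\mathbf c}}:=-\tfrac12\sum_{i\in S}\bm s_i\otimes(\hat{\bm r}_i\otimes\hat{\bm r}_i+\hat{\bJ}_i-\hat{\bJ}_0)$; both are assembled from scale-invariant quantities only, which proves the expansion for $\bm c$ in \eqref{eq:asymexpan}.

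For $\bm b$ and $\bm B$ the same expansions apply to each argument of the $\min$/$\max$ in \eqref{eq:bounds}: every entry of the candidate lists is either a reconstruction value $v(\bm z)=u(\bm z)+\mathcal O(h^3)$ or a cell average, so each equals $u(\bx_0)+h(\text{linear in }\nabla u(\bx_0))+h^2(\text{linear in }\nabla\nabla u(\bx_0))+\mathcal O(h^3)$, where the linear parts are generated by the scale-invariant offsets $\hat{\bm d}:=(\bm z-\bx_0)/h$ of the candidate points. Subtracting $u_0^n=u(\bx_0)+\tfrac{h^2}{2}\nabla\nabla u(\bx_0):\hat{\bJ}_0+\mathcal O(h^3)$ removes the common $u(\bx_0)$. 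The leading $\mathcal O(h)$ part of $m_{0j}^n-u_0^n$ is then $h$ times the minimum of $\hat{\bm d}\cdot\nabla u(\bx_0)$ over the active candidates (note $\hat{\bm d}=\bm 0$ for the $u_0^n$ entry, so the minimum is $\le0$, consistent with $m_{0j}^n\le u_0^n$), and analogously with $\max$ for $M_{0j}^n-u_0^n$. Once the extremizing collocation point is identified, the corresponding row of $\overline{\mathbf b}$ (resp.\ $\overline{\mathbf B}$) is its scale-invariant offset $\hat{\bm d}$ and the row of $\overline{\overline{\mathbf b}}$ (resp.\ $\overline{\overline{\mathbf B}}$) is its scale-invariant second-order coefficient; rows repeat over $q=1,\dots,Q$ since $m_{0j}^n,M_{0j}^n$ do not depend on $q$. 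This establishes the expansions for $\bm b$ and $\bm B$.

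The main obstacle is precisely the nonsmoothness of the $\min$/$\max$ in \eqref{eq:bounds}. Unlike $\bm c$, the bounds are not linear in $(\nabla u(\bx_0),\nabla\nabla u(\bx_0))$: their $\mathcal O(h)$ coefficients are concave (resp.\ convex) piecewise-linear functionals of $\nabla u(\bx_0)$, so the ``tensors'' $\overline{\mathbf b},\overline{\mathbf B}$ must be read as the scale-invariant coefficients of the active branch selected by the direction of $\nabla u(\bx_0)$. I would therefore argue that, for $\nabla u(\bx_0)\ne\bm 0$ and away from the measure-zero set of tie directions, a single candidate strictly attains each extremum, so that for $h$ small enough the same candidate remains active; each component of $\bm b,\bm B$ is then a genuine smooth function of $h$ and the stated expansion holds with well-defined scale-invariant coefficients. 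The only remaining care is to verify that the $\mathcal O(h^3)$ remainders are uniform, which follows from the uniform third-order accuracy of $v^{n-1}$ on smooth data and the boundedness of the finite candidate list.
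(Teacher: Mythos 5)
Your treatment of $\bm c$ is essentially identical to the paper's proof: Taylor-expand $u$ about $\bx_0$, integrate to get the cell-average expansions, form $u_i^n-u_0^n$, substitute into $\bm c=-\sum_{i\in S}(u_i^n-u_0^n)\bm s_i$, and read off $\overline{\mathbf c}$ and $\overline{\overline{\mathbf c}}$ from the $h$- and $h^2$-terms. Where you go beyond the paper is on $\bm b$ and $\bm B$: the paper dismisses these with ``can be analyzed in a similar manner,'' whereas they are in fact \emph{not} similar, because the bounds \eqref{eq:bounds} are $\min$/$\max$ functionals and hence only piecewise linear in $\nabla u(\bx_0)$ --- e.g.\ in one dimension the leading term of $m_{00}^n-u_0^n$ is $-\tfrac{h}{2}|u'(x_0)|$, which no fixed tensor $\overline{\mathbf b}$ reproduces linearly. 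Your resolution (fix the strictly attaining candidate for $\nabla u(\bx_0)\neq\bm 0$ away from tie directions, so that for small $h$ the active branch is locked and the branchwise coefficients are scale-invariant) is the correct way to make the lemma precise, and your caveat at critical points of $u$ is consistent with the accuracy degradation near extrema that the paper itself observes numerically. In short: same route as the paper where the paper gives one, and a legitimate repair of the part the paper leaves unproved.
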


\begin{proof}
  Here we investigate the asymptotic expansion (\ref{eq:asymexpan}a) of $\bm c$.
  The expansions of $\bm b$ and $\bm B$ can be analyzed in a similar manner.
  Note that the Taylor expansion of $u(\bx)$ about $\bx_0$ gives
  \[
  u(\bx)=u(\bx_0)+(\bx-\bx_0)^\top\nabla u(\bx_0) 
  +\dfrac{1}{2}(\bx-\bx_0)^\top \nabla\nabla u(\bx_0) (\bx-\bx_0)+\mathcal O(h^3).
  \]
  Therefore, the cell averages can be expressed as
  \begin{align*}
  &u_0=\aint_{T_0} u(\bx)\mathrm d\bx
  =u(\bx_0)+\dfrac{1}{2}\mathbf J_0:\nabla\nabla u(\bx_0)+\mathcal O(h^3),\\
  &u_i=\aint_{T_i} u(\bx)\mathrm d\bx
  =u(\bx_0)+\bm r_i\cdot\nabla u(\bx_0)+
  \dfrac{1}{2}(\bm r_i\otimes\bm r_i+\mathbf J_i):\nabla\nabla u(\bx_0)+\mathcal O(h^3),
  \end{align*}
  and as a result,
  \[
  u_i-u_0=
  \bm r_i\cdot\nabla u(\bx_0)
  +\dfrac{1}{2}(\bm r_i\otimes\bm r_i +\mathbf J_i-\mathbf J_0):\nabla\nabla u(\bx_0)+\mathcal O(h^3).
  \]	   
  The first-order coefficient then satisfies
  \[
  \begin{split}
  \bm c& = -\sum_{i\in S} 
  \bm s_i
  (u_i-u_0) \\
  & = - \sum_{i\in S} 
  \bm s_i
  \left( \bm r_i\cdot\nabla u(\bx_0)+
  \dfrac{1}{2}
  (\bm r_i\otimes\bm r_i+\bJ_i-\bJ_0):\nabla \nabla u(\bx_0)
  \right) +\mathcal O(h^3) \\
  & = 
  -\sum_{i\in S} 
  (\bm s_i\otimes \bm r_i)\cdot\nabla u(\bx_0)
  -\dfrac{1}{2}\sum_{i\in S} 
  (\bm s_i\otimes (\bm r_i\otimes\bm r_i+\bJ_i-\bJ_0)):
  \nabla\nabla u(\bx_0)+\mathcal O(h^3).
  \end{split}
  \]
  From here we can identify the expansion coefficients $\overline{\mathbf c}$
  and $\overline{\overline{\mathbf c}}$ in (\ref{eq:asymexpan}a).
\end{proof}

With the aid of expansions \eqref{eq:asymexpan}, we are able to derive the
continuous limit of the quadratic programming problem \eqref{eq:mainqp}.
Indeed, introduce a new variable $\bm{\psi}=\bm{\varphi}/h$, then the problem
\eqref{eq:mainqp} can be turned into the following equivalent form
\begin{equation*}
\begin{split}
\min~&\dfrac{1}{2}\bm\psi^\top \mathbf G
\bm\psi+h^{-1}\bm c^{\top}\bm\psi\\
\st~ & h^{-1}\bm b\le \mathbf A\bm\psi\le h^{-1}\bm B.
\end{split}
\end{equation*}
From here we can see that the continuous limit of the problem \eqref{eq:mainqp}
is
\begin{equation}
\begin{split}
\min~&\dfrac{1}{2}\bm\psi^\top\mathbf G
\bm\psi+(\overline{\mathbf c}\cdot\nabla u(\bx_0))^\top\bm\psi\\
\st~ & \overline{\mathbf b}\cdot\nabla u(\bx_0)\le 
\mathbf A\bm\psi\le \overline{\mathbf B}\cdot\nabla u(\bx_0).
\end{split}
\label{eq:conlimit}
\end{equation}
The above limiting problem provides us a precise statement of the well-posedness
of the problem \eqref{eq:mainqp}, which is a prerequisite of the accuracy result
stated below.
\begin{theorem}\label{thm:reconacc}
  Suppose that the solution operator
  $\mathcal Q(\mathbf G,\cdot, \mathbf A,\cdot,\cdot)$
  is Lipschitz continuous in the neighborhood of
  $(\overline{\mathbf c}\cdot\nabla u(\bx_0),~ \overline{\mathbf b}\cdot\nabla
  u(\bx_0),~ \overline{\mathbf B}\cdot\nabla u(\bx_0))$,
  then for any function
  $u \in C^3(\Omega)\bigcap L^1(\Omega)$, we have 
  \[
  \|\mathcal{R}_h[u]-u\|_{Z_0}=\mathcal O(h^3),
  \]
  where the semi-norm $\|\cdot\|_{Z_0}$ is defined by
  $\| f\|_{Z_0} =\max_{\bm z\in Z_0} |f(\bm z)|$.
\end{theorem}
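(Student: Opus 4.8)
The plan is to compare the reconstruction of $u$ against the reconstruction of its second-order Taylor polynomial at $\bx_0$, exploiting the remark that $\mathcal R_h$ reproduces quadratics exactly. Let $p$ be the degree-two Taylor polynomial of $u$ about $\bx_0$,
\[
p(\bx)=u(\bx_0)+\nabla u(\bx_0)\cdot(\bx-\bx_0)+\tfrac12(\bx-\bx_0)^\top\nabla\nabla u(\bx_0)(\bx-\bx_0),
\]
so that $u-p=\mathcal O(h^3)$ uniformly over the patch $\mathcal P_h$, since $u\in C^3$ and every cell and collocation point of the patch lies within $\mathcal O(h)$ of $\bx_0$. Because $p$ is quadratic, $\mathcal R_h[p]=p$; writing $\bm\varphi_u,\bm\varphi_p$ for the minimizers of \eqref{eq:mainqp} associated with $u$ and $p$, and $p_0=\aint_{T_0}p$, this means $p_0+\bm\varphi_p^\top\bm a(\bz)=p(\bz)$ for every $\bz\in Z_0$.

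With this identity I would split the pointwise error, for each $\bz\in Z_0$, as
\[
\mathcal R_h[u](\bz)-u(\bz)=\underbrace{(u_0-p_0)}_{(\mathrm I)}+\underbrace{(\bm\varphi_u-\bm\varphi_p)^\top\bm a(\bz)}_{(\mathrm{II})}+\underbrace{(p(\bz)-u(\bz))}_{(\mathrm{III})}.
\]
Terms $(\mathrm I)$ and $(\mathrm{III})$ are immediately $\mathcal O(h^3)$: the former equals $\aint_{T_0}(u-p)$, and the latter is the pointwise Taylor remainder at $\bz$ with $\bz-\bx_0=\mathcal O(h)$. Since $\bm a(\bz)=\mathcal O(1)$ by its definition \eqref{eq:coeff2} (each block is a ratio of an $\mathcal O(h^k)$ quantity to $h^k$), the whole estimate reduces to proving $\|\bm\varphi_u-\bm\varphi_p\|=\mathcal O(h^3)$.

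To control $(\mathrm{II})$ I would compare the two quadratic programs, which share $\mathbf G$ and $\mathbf A$ (geometry only), so only the remaining data differ. Using $u_i-p_i=\aint_{T_i}(u-p)=\mathcal O(h^3)$ together with $\bm s_i=\mathcal O(1)$ in \eqref{eq:rs}--\eqref{eq:coeff1} gives $\bm c_u-\bm c_p=\mathcal O(h^3)$; and since the bounds \eqref{eq:bounds} are built from pointwise values and cell averages of the previous-step function (here $u$ resp.\ $p$) through the $1$-Lipschitz $\min/\max$ operations, the perturbation $u-p=\mathcal O(h^3)$ at the collocation points and in the averages propagates to $\bm b_u-\bm b_p,\ \bm B_u-\bm B_p=\mathcal O(h^3)$. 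Passing to the scaled variable $\bm\psi=\bm\varphi/h$ as in the derivation of \eqref{eq:conlimit}, Lemma \ref{lmm:1} shows that the scaled data of both programs differ from the common continuous-limit data $(\overline{\mathbf c}\cdot\nabla u(\bx_0),\ \overline{\mathbf b}\cdot\nabla u(\bx_0),\ \overline{\mathbf B}\cdot\nabla u(\bx_0))$ by $\mathcal O(h)$ (the gradients of $u$ and $p$ agree at $\bx_0$), so for $h$ small both lie in the neighborhood where $\mathcal Q(\mathbf G,\cdot,\mathbf A,\cdot,\cdot)$ is assumed Lipschitz, while they differ from one another by $\mathcal O(h^2)$. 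The Lipschitz hypothesis then yields $\|\bm\psi_u-\bm\psi_p\|=\mathcal O(h^2)$, i.e.\ $\|\bm\varphi_u-\bm\varphi_p\|=\mathcal O(h^3)$. Substituting the three bounds and taking the maximum over $\bz\in Z_0$ closes the argument.

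The step I expect to require the most care is term $(\mathrm{II})$: one must check that all $h$-scaled data stay inside a single neighborhood on which the solution map is Lipschitz with a \emph{uniform} constant --- this is exactly why the continuous-limit problem \eqref{eq:conlimit} and the expansions of Lemma \ref{lmm:1} enter as hypotheses --- and that the $\min/\max$ defining the bounds \eqref{eq:bounds} do not degrade the $\mathcal O(h^3)$ perturbation. The remaining estimates are routine Taylor bookkeeping.
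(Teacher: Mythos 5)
Your proposal is correct and follows essentially the same route as the paper: both introduce the second-order Taylor polynomial $q$ at $\bx_0$, use $\mathcal R_h[q]=q$ together with the $\mathcal O(h^3)$ Taylor remainder, and reduce the problem to bounding the difference of the two quadratic-programming solutions via the Lipschitz continuity of $\mathcal Q$ in the $h$-scaled variables near the continuous-limit data of \eqref{eq:conlimit}. Your direct derivation of the $\mathcal O(h^3)$ perturbation of $\bm c,\bm b,\bm B$ from $u-q=\mathcal O(h^3)$ (rather than reading it off Lemma \ref{lmm:1}) is an equivalent, if slightly more explicit, bookkeeping of the same estimate.
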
 

\begin{proof}
	Denote the second-order Taylor polynomial of $u$ by
	\[
	q(\bx)=u(\bx_0)+(\bx-\bx_0)^\top\nabla u(\bx_0)
	+\dfrac{1}{2}(\bx-\bx_0)^\top\nabla\nabla u(\bx_0)(\bx-\bx_0).
	\]
	Obviously
	\[
	\mathcal{R}_h[q]=q
	\quad\text{~and~}\quad  
	\|q-u\|_{Z_0}=\mathcal O(h^3).
	\]
  Then by the triangle inequality, we have
	\[
	\begin{split}
 	      & \|\mathcal{R}_h[u]-u\|_{Z_0} \\
 	 \le & \|q-u\|_{Z_0}+\|\mathcal{R}_h[q] - q\|_{Z_0}+
 	   \|\mathcal{R}_h[u]-\mathcal{R}_h[q]\|_{Z_0} \\
	\le &\mathcal O(h^3) + \mathcal O(h^3)
	 + \max_{\bm z\in Z_0}\|\bm a(\bm z)\|
	\cdot 
	\bigg \|h\mathcal Q 
    \left(
    {\mathbf G},
    h^{-1}\bm c,
    {\mathbf A},
    h^{-1}\bm b,
    h^{-1}\bm B
    \right)
    -h\mathcal Q
    \Big(
    {\mathbf G},
    \overline{\mathbf c}\cdot\nabla u(\bx_0)\\
    +&h\overline{\overline{\mathbf c}}:\nabla \nabla u(\bx_0),
    {\mathbf A},
    \overline{\mathbf b}\cdot\nabla u(\bx_0)
    +h\overline{\overline{\mathbf b}}:\nabla \nabla u(\bx_0),
    \overline{\mathbf B}\cdot\nabla u(\bx_0)
    +h\overline{\overline{\mathbf B}}:\nabla \nabla u(\bx_0)
    \Big) \bigg \| \\
    =&\mathcal O(h^3)+\mathcal O(h)\cdot \mathcal O(h^2)\\
    =&\mathcal O(h^3).
	\end{split}
	\]	
\end{proof}

Of course this estimation is only valid for smooth functions.  For conservation
laws, solutions are so seldom to be smooth that the stability of the numerical
scheme is of one's more concern.  High-order reconstruction may introduce
spurious oscillations near discontinuities.  One needs some stability criterion,
such as the local maximum principle, to rule out solutions with spurious
oscillations.  Here we show that the forward Euler scheme \eqref{eq:fvm-euler}
with our reconstruction satisfies a local maximum principle. The third-order SSP
discretization will thereby satisfy the local maximum principle due to the
convex combination.  

Before verifying the local maximum principle, let us investigate the
decomposition of the second moment tensor for an arbitrary control volume, as is
stated in the following lemma:

\begin{lemma}\label{lemma:sm}
  Let $K_j$ be the $d$-dimensional hyper-pyramid formed by the centroid $\bx_0$
  of $T_0$ and its facet $e_j~(\jj)$.  Introduce the coefficients
  $\alpha_j=|K_j|/|T_0|~(\jj)$. Then the following decomposition formula for the
  second moment tensor holds
  \[
  \aint_{T_0} \bx\otimes\bx \dd\bx 
  =\dfrac{d}{d+2}\sum_{j=1}^J\alpha_j \aint_{e_j} \bx\otimes\bx\dd\bx
  +\dfrac{2}{d+2}\bx_0\otimes\bx_0.
  \]
\end{lemma}

\begin{proof}
  At first, let us prove for any positive exponent $k$. One observes that
  \begin{equation}
    \aint_{T_0}(\bx-\bx_0)^{\otimes k} \dd\bx 
    =\sum_{j=1}^J\alpha_j 
    \aint_{K_j} (\bx-\bx_0)^{\otimes k}\dd\bx
    =\dfrac{d}{d+k} \sum_{j=1}^J \alpha_j
    \aint_{e_j} (\bx-\bx_0)^{\otimes k}\dd\bx,
    \label{iqr:kmoment}
  \end{equation}
  where $\bx^{\otimes k}$ denotes a tensor product of $\bx$ by $k$
  times to produce a $k$-th order tensor. Actually, the control volume
  $T_0$ is composed by a set of hyper-pyramids $K_1,\cdots,K_J$, thus
  we have
  \[
    \int_{T_0}(\bx-\bx_0)^{\otimes k} \dd\bx=\sum_{j=1}^J
    \int_{K_j}(\bx-\bx_0)^{\otimes k} \dd\bx.
  \]
  By the geometry of the hyper-pyramid $K_j$, we have
  \begin{equation}
    \int_{K_j} (\bx-\bx_0)^{\otimes k} \dd\bx=\int_0^1 l_j\dd\lambda \int_{\lambda e_j} (\bx-\bx_0)^{\otimes k} \dd \bx,
    \label{iqr:kmint}
  \end{equation}
  where $l_j$ is the height of $K_j$, and $\lambda e_j$ presents the
  cross section of hyper-pyramid $K_j$ parallel to the bottom surface
  $e_j$. The distance to the vertex is $\lambda l_j$, and $K_j$ is
  shown in the figure \ref{iqr:pyramid}.

  By a geometric similarity argument, we have
  \[
    \int_{\lambda e_j} (\bx-\bx_0)^{\otimes k} \dd
    \bx=\lambda^{d+k-1}\int_{e_j}(\bx-\bx_0)^{\otimes k} \dd \bx,
  \]
  and we insert it into \eqref{iqr:kmint} to have
  \[
    \int_{K_j} (\bx-\bx_0)^{\otimes k}
    \dd\bx=\dfrac{l_j}{d+k}\int_{e_j}(\bx-\bx_0)^{\otimes k} \dd \bx
    =\dfrac{d}{d+k}|K_j|\aint_{e_j}(\bx-\bx_0)^{\otimes k} \dd \bx.
  \]
  At last, we have that
  \[
    \int_{T_0}(\bx-\bx_0)^{\otimes k} \dd\bx=
    \dfrac{d}{d+k}\sum_{j=1}^J
    \alpha_j|T_0|\aint_{e_j}(\bx-\bx_0)^{\otimes k} \dd \bx.
  \]
  which prove \eqref{iqr:kmoment}.
  
  \begin{figure}[htbp]
    \centering
    \begin{tikzpicture}[scale=1]
      \coordinate (O) at (0,0);
      \coordinate (A) at (-1,-2.6);
      \coordinate (B) at (-0.5,-3.2);
      \coordinate (C) at (0.5,-3.2);
      \coordinate (D) at (1,-2.6);
      \coordinate (E) at (0,-2);
      \coordinate (A1) at (barycentric cs:O=.6,A=.4);
      \coordinate (B1) at (barycentric cs:O=.6,B=.4);
      \coordinate (C1) at (barycentric cs:O=.6,C=.4);
      \coordinate (D1) at (barycentric cs:O=.6,D=.4);
      \coordinate (E1) at (barycentric cs:O=.6,E=.4);
      \coordinate (annotation) at (barycentric cs:C=.5,D=.5);
      \coordinate (annotation1) at (barycentric cs:C1=.5,D1=.5);
      \fill [lightgray,opacity=.7] (A) -- (B) -- (C) -- (D) -- (E) -- cycle;
      \fill [lightgray,opacity=.7] (A1) -- (B1) -- (C1) -- (D1) -- (E1) -- cycle;
      \fill (O) circle (1pt) node[above]{$\bx_0$};
      \draw[thick] (O)--(A);
      \draw[thick] (O)--(B);
      \draw[thick] (O)--(C);
      \draw[thick] (O)--(D);
      \draw [densely dashed] (O)--(E);
      \draw[thick] (A)--(B)--(C)--(D);
      \draw[thick] (A1)--(B1)--(C1)--(D1);
      \draw [densely dashed] (A)--(E)--(D);
      \draw [densely dashed] (A1)--(E1)--(D1);
      \draw [-latex',thick] (annotation) + (.5,0) node[right] {$e_j$}-- (annotation);
      \draw [-latex',thick] (annotation1) + (.5,0) node[right] {$\lambda e_j$}-- (annotation1);
      \draw [|<->|,very thick] (1.8,-2.6) -- (1.8,0) node [midway,right] {$l_j$};
    \end{tikzpicture}
    \caption{hyper-pyramid $K_j$}\label{iqr:pyramid}
  \end{figure}

  In particular, we have
  \begin{align*}
  \aint_{T_0}(\bx-\bx_0)\otimes(\bx-\bx_0) \dd\bx 
  &=\dfrac{d}{d+2} \sum_{j=1}^J \alpha_j
  \aint_{e_j} (\bx-\bx_0)\otimes(\bx-\bx_0)\dd\bx,\\
  \bm 0=\aint_{T_0}(\bx-\bx_0)\dd\bx 
  &=\dfrac{d}{d+1} \sum_{j=1}^J \alpha_j
  \aint_{e_j} (\bx-\bx_0)\dd\bx.
  \end{align*}
  And as a result
  \begin{align*}
  \aint_{T_0}(\bx-\bx_0)\otimes(\bx-\bx_0) \dd\bx 
  &=\dfrac{d}{d+2} \sum_{j=1}^J \alpha_j
  \aint_{e_j} (\bx-\bx_0)\otimes(\bx-\bx_0)\dd\bx,\\
  \aint_{T_0}(\bx-\bx_0)\otimes \bx_0 \dd\bx 
  &=\dfrac{d}{d+2} \sum_{j=1}^J \alpha_j
  \aint_{e_j} (\bx-\bx_0)\otimes \bx_0 \dd\bx,\\
  \aint_{T_0} \bx_0\otimes  (\bx-\bx_0)\dd\bx 
  &=\dfrac{d}{d+2} \sum_{j=1}^J \alpha_j
  \aint_{e_j} \bx_0 \otimes (\bx-\bx_0)\dd\bx,\\    
    \aint_{T_0} \bx_0\otimes \bx_0\dd\bx
  &=\dfrac{d}{d+2} \sum_{j=1}^J \alpha_j
  \aint_{e_j}\bx_0\otimes \bx_0\dd\bx
  +\dfrac{2}{d+2} \bx_0\otimes \bx_0.
  \end{align*}
  Summing up the above four identities yields the desired formula.
\end{proof}
Next we can establish the following quadrature rule
\begin{lemma}\label{lemma:qf}
  The following quadrature formula is exact for any quadratic polynomial $v$
  \begin{equation}
  \aint_{T_0} v(\bx)\dd\bx
  =\dfrac{d}{d+2}\sum_{j=1}^J\sum_{q=1}^{Q_j} \alpha_jw_{jq}v(\bm z_{jq})
  +\dfrac{2}{d+2}v(\bx_0).
  \label{eq:quadexpan}
  \end{equation}
\end{lemma}

\begin{proof}
   Since the quadrature rules specified on the facet $e_j$ is of at least second-order 
   accuracy, we have
  \[
  \sum_{q=1}^{Q_j} w_{jq}\bm z_{jq}=\aint_{e_j} \bx\dd\bx\quad\text{~and~}\quad
  \sum_{q=1}^{Q_j} w_{jq}\bm z_{jq}
  \otimes \bm z_{jq}
  =\aint_{e_j} \bx\otimes \bx\dd\bx.
  \]
  Using the formula in Lemma \ref{lemma:sm} we know that
  \begin{align*}
  &\dfrac{d}{d+2}\sum_{j=1}^J\sum_{q=1}^{Q_j}\alpha_jw_{jq}\bm z_{jq}
  +\dfrac{2}{d+2}\bx_0
  =\bx_0
  =\aint_{T_0}\bx\dd\bx, \\
  &\dfrac{d}{d+2}\sum_{j=1}^J\sum_{q=1}^{Q_j}\alpha_jw_{jq}\bm z_{jq}\otimes \bm z_{jq}
  +\dfrac{2}{d+2}\bx_0\otimes \bx_0
  =\bJ_0+\bx_0\otimes \bx_0
  =\aint_{T_0}\bx\otimes \bx\dd\bx.
  \end{align*}
  From this we conclude that \eqref{eq:quadexpan} holds for the function
  $v:\bx\rightarrow \bx$ and $v:\bx\rightarrow \bx\otimes\bx$, and so is any 
  quadratic function $v$. This completes the proof.
\end{proof}

With the aid of the formula \eqref{eq:quadexpan}, we are able to verify a local maximum principle
for the finite volume scheme \eqref{eq:fvm-euler} following the line in \cite{Zhang2012}.

\begin{theorem}\label{thm:lmp}
  Suppose that $\mathcal T$ is a convex $d$-polytope grid.  Let $L_{\min}$ be
  the minimum distance from the centroid of any given cell to all the facets of
  this cell. Moreover, let $\mathcal F$ be a monotone $C^1$ numerical flux
  function. Then the finite volume scheme \eqref{eq:fvm-euler} with
  integrated quadratic reconstruction fulfills the following local maximum
  principle
  \begin{equation}
    \min_{0\le j\le J}\min_{\bm z\in Z_j}{v_{h,j}^{n}(\bm z)}
    \le u_0^{n+1}\le 
    \max_{0\le j\le J}\max_{\bm z\in Z_j}{v_{h,j}^{n}(\bm z)},
    \label{eq:genmp}
  \end{equation}
  under the CFL condition
  \begin{equation}
  \Delta t_n\sup_{u^-,u^+,\bm n}\dfrac{\partial\mathcal F(u^-,u^+;\bm n)}{\partial u^-}
   \le\dfrac{L_{\min}}{d+2}.
   \label{eq:cflcond}
  \end{equation}
  \end{theorem}
\begin{proof}
Inserting the quadrature formula \eqref{eq:quadexpan} into the finite volume
scheme \eqref{eq:fvm-euler} yields
\[
u_0^{n+1}=
\dfrac{2}{d+2}v_{h,0}^n(\bx_0)+
\sum_{j=1}^J\sum_{q=1}^{Q_j}\left(
\dfrac{d\alpha_j}{d+2}w_{jq}v_{h,0}^n(\bm z_{jq})-\dfrac{\Delta t_n}{|T_0|}w_{jq}
\mathcal F(v_{h,0}^n(\bm z_{jq}),v_{h,j}^n(\bm z_{jq});\bm n_j)|e_j|\right),
\]
If we take the right-hand side of the above scheme as a function 
\[
u_0^{n+1}=\mathcal H(v_{h,0}^{n}(\bm z_{11}),\cdots,v_{h,0}^{n}
(\bm z_{JQ_J}),v_{h,1}^{n}(\bm z_{11}),\cdots,v_{h,J}^{n}(\bm z_{JQ_J})
,v_{h,0}^{n}(\bx_0)),
\]
we then have that
\begin{align*}
\dfrac{\partial \mathcal H}{\partial v_{h,0}^n(\bm z_{jq})}&
=\dfrac{d\alpha_jw_{jq}}{d+2}-
\dfrac{\Delta t_n|e_j|w_{jq}}{|T_0|}\cdot
\dfrac{\partial\mathcal F(v_{h,0}^n(\bm z_{jq}),v_{h,j}^n(\bm z_{jq});\bm n_j)}
{\partial v_{h,0}^n(\bm z_{jq})}\\
&\ge\dfrac{d\alpha_jw_{jq}}{d+2}-
\dfrac{\Delta t_n|e_j|w_{jq}}{|T_0|}
\sup_{u^-,u^+,\bm n} \dfrac{\partial\mathcal F(u^-,u^+;\bm n)}{\partial u^-},\\
\dfrac{\partial \mathcal H}{\partial v_{h,j}^n(\bm z_{jq})}&
=-\dfrac{\Delta t_n|e_j|w_{jq}}{|T_0|}\cdot\dfrac{\partial
\mathcal F(v_{h,0}^n(\bm z_{jq}),v_{h,j}^n(\bm z_{jq});\bm n_j)}{\partial v_{h,j}^n(\bm z_{jq})}\ge 0,\\
\dfrac{\partial \mathcal H}{\partial v_{h,0}^n(\bx_0)}&=\dfrac{2}{d+2}>0.
\end{align*}
As a result, $\mathcal H$ is non-decreasing with respect to each argument provided that 
\begin{equation*}
\Delta t_n\sup_{u^-,u^+,\bm n} \dfrac{\partial\mathcal F(u^-,u^+;\bm n)}{\partial u^-}
\le \dfrac{d}{d+2}\min_{1\le j\le J}\dfrac{\alpha_j|T_0|}{|e_j|}
=\dfrac{1}{d+2}\min_{1\le j\le J}\dfrac{d|K_j|}{|e_j|}.
\end{equation*}
Note that $d|K_j|/|e_j|$ is exactly the distance from the point $\bm x_0$ to the
facet $e_j$.  Therefore, a sufficient condition of the time restriction is
\[
\Delta t_n\sup_{u^-,u^+,\bm n} \dfrac{\partial\mathcal F(u^-,u^+;\bm n)}{\partial u^-}
\le\dfrac{L_{\min}}{d+2}.
\]
Also, we have $\mathcal H(u,\cdots,u)=u$ due to the consistency of numerical
flux functions. Denote
\[
u^{\min}=\min_{0\le j\le J}\min_{\bm z\in Z_j}{v_{h,j}^{n}(\bm z)}\quad\text{~and~}\quad
u^{\max}=\max_{0\le j\le J}\max_{\bm z\in Z_j}{v_{h,j}^{n}(\bm z)},
\]
then the monotonicity of $\mathcal H$ implies the desired local maximum principle 
\[
u^{\min}\le \mathcal H(u^{\min},\cdots,u^{\min})\le
u_0^{n+1}\le \mathcal H(u^{\max},\cdots,u^{\max})=u^{\max}.
\]	
\end{proof}
\begin{remark}
   Another form of CFL condition in terms of the mesh size $h$ is also useful
   \[
   a\Delta t_n\le \nu h,
   \]
   where $\nu$ is a CFL number.  Here we measure the mesh size $h$ of simplicial
   control volume (triangle or tetrahedron) by the diameter of its inscribed
   ball, whereas that of Cartesian control volume (rectangle or
   cuboid) the harmonic mean of its dimensions.  The value of  CFL number $\nu$
   under such definition is also listed in Table \ref{tab:sms}.
\end{remark}

Although the local maximum principle given in Theorem \ref{thm:lmp} is not
recursively formulated, we can verify the bound-preserving property, saying the
numerical solution at any time level is bounded by the initial solution.  More
specifically, we have
\begin{corollary}
  The finite volume scheme \eqref{eq:fvm-euler} with integrated quadratic
  reconstruction is bound-preserving, namely
  \begin{equation}
    \inf_{\bx \in\Omega}{u(\bx,0)}\le u_0^{n}\le \sup_{\bx \in\Omega}{u(\bx,0)},
    \label{eq:bpp}
  \end{equation}
  provided that the solution is advanced with a time step subjected to the CFL
  condition \eqref{eq:cflcond}.
\end{corollary}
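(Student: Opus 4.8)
The plan is to track global running extrema of the reconstructed polynomials over the whole mesh, show they are monotone in time, and then pin down their initial value using the continuous initial datum. For each time level $n\ge 0$ define
\[
\mathcal{M}^n=\max_{T_0\in\mathcal{T}}\max_{\bm z\in Z_0}v_0^n(\bm z),\qquad
\mathcal{N}^n=\min_{T_0\in\mathcal{T}}\min_{\bm z\in Z_0}v_0^n(\bm z).
\]
I would prove that $\mathcal{M}^n$ is non-increasing and $\mathcal{N}^n$ is non-decreasing, and then combine this with the local maximum principle of Theorem \ref{thm:lmp} to bound the cell averages $u_0^n$.

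The monotonicity follows from coupling two facts. First, the constraints \eqref{eq:cons} force every cluster value $v_0^n(\bm z)$ to lie below the upper bound $M_{0j}^n$ (or $M_{00}^n$), and by \eqref{eq:bounds} each such bound is a maximum taken only over previous reconstruction values on $Z_0$ and $Z_j$ and over the current cell averages $u_0^n,u_j^n$; hence $v_0^n(\bm z)\le\max(\mathcal{M}^{n-1},\max_{T\in\mathcal{T}}u_T^n)$, giving $\mathcal{M}^n\le\max(\mathcal{M}^{n-1},\max_{T\in\mathcal{T}}u_T^n)$. Second, the local maximum principle \eqref{eq:genmp} applied to the step from $n-1$ to $n$ yields $u_0^n\le\max_{0\le j\le J}\max_{\bm z\in Z_j}v_j^{n-1}(\bm z)\le\mathcal{M}^{n-1}$, so $\max_{T\in\mathcal{T}}u_T^n\le\mathcal{M}^{n-1}$. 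Together these give $\mathcal{M}^n\le\mathcal{M}^{n-1}$, and a symmetric argument with the lower bounds in \eqref{eq:bounds} and the lower half of \eqref{eq:genmp} gives $\mathcal{N}^n\ge\mathcal{N}^{n-1}$.

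For the base case I would use the initialization \eqref{eq:initial-value}, where $v_h^{-1}=u(\cdot,0)$ is the continuous initial datum. Then in \eqref{eq:bounds} every occurrence of $v^{-1}$ evaluated on a cluster is bounded by $\sup_{\Omega}u(\cdot,0)$, and the cell averages satisfy $u_0^0=\Pi u(\cdot,0)|_{T_0}\le\sup_{\Omega}u(\cdot,0)$; consequently $M_{0j}^0,M_{00}^0\le\sup_{\Omega}u(\cdot,0)$ and therefore $\mathcal{M}^0\le\sup_{\Omega}u(\cdot,0)$, with the symmetric estimate $\mathcal{N}^0\ge\inf_{\Omega}u(\cdot,0)$. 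Combining with the established monotonicity, $\mathcal{M}^n\le\sup_{\Omega}u(\cdot,0)$ and $\mathcal{N}^n\ge\inf_{\Omega}u(\cdot,0)$ for every $n$. Finally, for $n\ge1$ the cell average obeys $u_0^n\le\mathcal{M}^{n-1}\le\sup_{\Omega}u(\cdot,0)$ by the local maximum principle (and $u_0^0=\Pi u(\cdot,0)|_{T_0}$ is trivially within these bounds), which together with the corresponding lower estimate is exactly \eqref{eq:bpp}.

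The step I expect to require the most care is the interlocking bookkeeping of the three families of quantities --- the constraint bounds $m,M$, the cluster extrema $\mathcal{M},\mathcal{N}$, and the cell averages $u_0^n$ --- so that the induction closes without circularity: the reconstruction bounds at level $n$ must be controlled by the cluster extrema at level $n-1$, while the cell averages at level $n$ are in turn controlled through Theorem \ref{thm:lmp}. A secondary point is to note that, since the third-order SSP Runge-Kutta update is a convex combination of forward Euler stages each obeying \eqref{eq:genmp} under \eqref{eq:cflcond}, the same monotonicity of $\mathcal{M}$ and $\mathcal{N}$ propagates across the intermediate stages, so the bound-preserving property \eqref{eq:bpp} extends verbatim to the full discretization.
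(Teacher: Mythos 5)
Your proposal is correct and follows essentially the same route as the paper: define the global cluster extrema $\hat M^n=\max_{T_0}\max_{\bm z\in Z_0}v_0^n(\bm z)$, bound them by $\max\{\hat M^{n-1},\max_T u_T^n\}$ via the constraints \eqref{eq:cons}--\eqref{eq:bounds}, close the induction with the local maximum principle \eqref{eq:genmp}, and anchor the base case using $v_h^{-1}=u(\cdot,0)$ in the initialization \eqref{eq:initial-value}. The only cosmetic difference is that the paper additionally records $u_0^n\le\hat M^n$ via the quadrature convex combination, whereas you bound $u_0^n$ by $\hat M^{n-1}$ directly through the local maximum principle; both chains are valid.
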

\begin{proof}
	Introduce the following notations of global upper bounds at $n$-th time level
	\[
	M^{n}=\max_{T_0\in\mathcal T}{u_0^{n}},\quad
	\hat M^{n}=\max_{T_0\in\mathcal T}
	\max_{\bm z\in Z_0}{v_{h,0}^{n}(\bm z)},\quad
	n=0,1,2,\cdots.
	\]
  Since $u_0^n$ is a convex combination of $\{v_{h,0}^n(\bm z)\}_{\bm z\in Z_0}$, we
  have $u_0^{n}\le \max\limits_{\bm z\in Z_0}{v_{h,0}^{n}(\bm z)}$.  Taking the
  maximum over all $T_0\in \mathcal T$ yields the relation $M^{n}\le \hat
  M^{n}$.  For any cell $T_0\in\mathcal T$ and $\bm z\in Z_0$, the construction
  of integrated quadratic reconstruction yields
	\[
	v_{h,0}^{n}(\bm z)\le \max\limits_{0\le j\le J}{M_{0j}^{n}}
	\le \max\{\hat M^{n-1},M^{n}\},\quad\forall \bm z\in Z_0,
	\]
  and hence $\hat M^{n}\le \max\{\hat M^{n-1},M^{n}\}$.  Note that the local
  maximum principle \eqref{eq:genmp} implies that $M^{n}\le \hat M^{n-1}$.
  Therefore we have the monotonicity 
	\[
	\hat M^{n}\le \hat M^{n-1},\quad n=1,2,\cdots.
	\]
	On the other hand, the construction of initial time level yields
	\[
	v_{h,0}^{0}(\bm z)\le \max\limits_{0\le j\le J}{M_{0j}^{0}}
	\le \max\left\{ \max_{T_0\in\mathcal T}\max_{\bm z\in Z_0} u(\bm z,0),M^{0}\right\}
	\le \sup_{\bx \in\Omega}{u(\bx,0)},\quad\forall \bm z\in Z_0,
	\]
  and hence $\hat M^{0}\le\sup\limits_{\bx \in\Omega}{u(\bx,0)}$.  Finally we
  conclude that
	\[
	u_0^{n}\le M^{n}\le \hat M^{n}\le \cdots \le \hat M^{0}\le 
	\sup\limits_{\bx \in\Omega}{u(\bx,0)}.
	\]
	Similarly we can verify the left-hand side of the inequality \eqref{eq:bpp}.
\end{proof}    


\section{Numerical Results}

In this section we provide some numerical results to demonstrate the performance
of the integrated quadratic reconstruction.  The time step length is indicated
by the CFL number listed in Table \ref{tab:sms} if not otherwise specified. The
results are compared against the scaling limiter of Zhang \textit{et al.}
\cite{Zhang2012} acting on the 2-exact reconstruction, which we refer to as 
the scaling quadratic reconstruction (SQR) in our context.

\subsection{Two-dimensional linear equation}

This is a two-dimensional problem used to assess the order of accuracy.  We
solve the following linear equation
\[
u_t+u_x+2u_y=0,
\]
with initial profile given by the double sine wave function 
\[
u(x,y,0)=\sin(2\pi x)\sin(2\pi y).
\]
This problem has also been considered in
\cite{Hubbard1999,Park2010,May2013,Chen2016}.  The computational domain is
$[0,1]\times[0,1]$.  Periodic boundary conditions are applied.  We perform the
convergence test on both rectangular and triangular meshes.  The rectangular
mesh is uniform.  In the triangular mesh test, both structured and unstructured
meshes are examined.  The structured mesh is generated by dividing each
rectangular element along the diagonal direction, while the unstructured mesh is
generated by Delaunay triangulation.  In Table \ref{tab:2Dadv}, one observes
third-order of accuracy on various meshes.

\begin{figure}[!ht]
  \centering
  \captionof{table}{Accuracy for 2D linear equation.}
  \label{tab:2Dadv}
  \raisebox{-.1\textheight}
  {\includegraphics[width=.32\textwidth]{rectmesh.pdf}}
  \qquad
  \begin{tabular}{lrrrr}
    \toprule
    \multicolumn{5}{c}{\bf Rectangular meshes} \\
    \cmidrule(lr){1-5}
        $h$ & $L^1$ error & Order & 
    $ L^\infty$ error & Order \\ \midrule
    $1/8$ & 3.84E-01 & --- & 9.52E-01 & --- \\ 
    $1/16$  & 1.36E-01 & 1.50 & 3.38E-01 & 1.50 \\ 
    $1/32$ & 2.08E-02 & 2.71 & 5.34E-02 & 2.66 \\ 
    $1/64$ & 2.68E-03 & 2.96 & 7.44E-03 & 2.84 \\ 
    $1/128$ & 3.37E-04 & 3.00 & 1.09E-03 & 2.77 \\ 
    $1/256$ & 4.21E-05 & 3.00 & 1.79E-04 & 2.60 \\ 
    \bottomrule
  \end{tabular}	
  \raisebox{-.1\textheight}
  {\includegraphics[width=.32\textwidth]{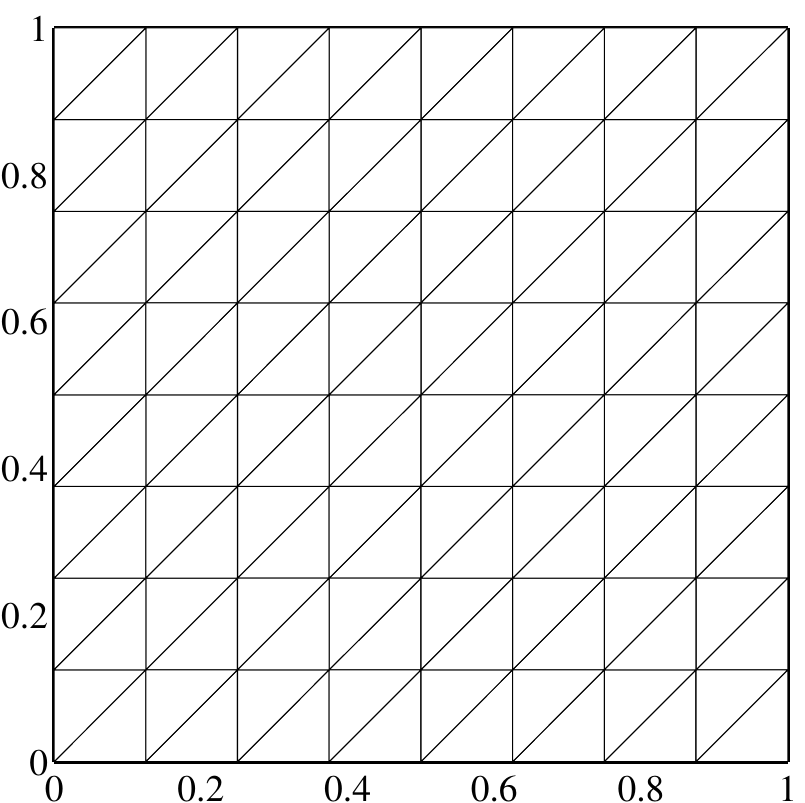}}
  \qquad
  \begin{tabular}{lrrrr}
    \toprule
    \multicolumn{5}{c}{\bf Structured triangular meshes} \\
    \cmidrule(lr){1-5}
    $h$ & $ L^1$ error & Order & 
    $ L^\infty$ error & Order \\ \midrule		
    $1 / 8$ & 2.79E-01 & --- & 5.14E-01 & ---  \\ 
    $1/ 16$ & 7.22E-02 & 1.95 & 1.27E-01 & 2.02 \\ 
    $1 / 32$ & 1.02E-02 & 2.82 & 1.82E-02 & 2.80 \\ 
    $1 / 64$ & 1.30E-03 & 2.97 & 2.54E-03 & 2.84  \\ 
    $1/ 128$ & 1.63E-04 & 3.00 & 3.75E-04 & 2.76 \\ 
    $1/ 256$ & 2.04E-05 & 3.00 & 6.22E-05 & 2.59  \\
    \bottomrule  	
  \end{tabular}	
  \raisebox{-.1\textheight}
  {\includegraphics[width=.32\textwidth]{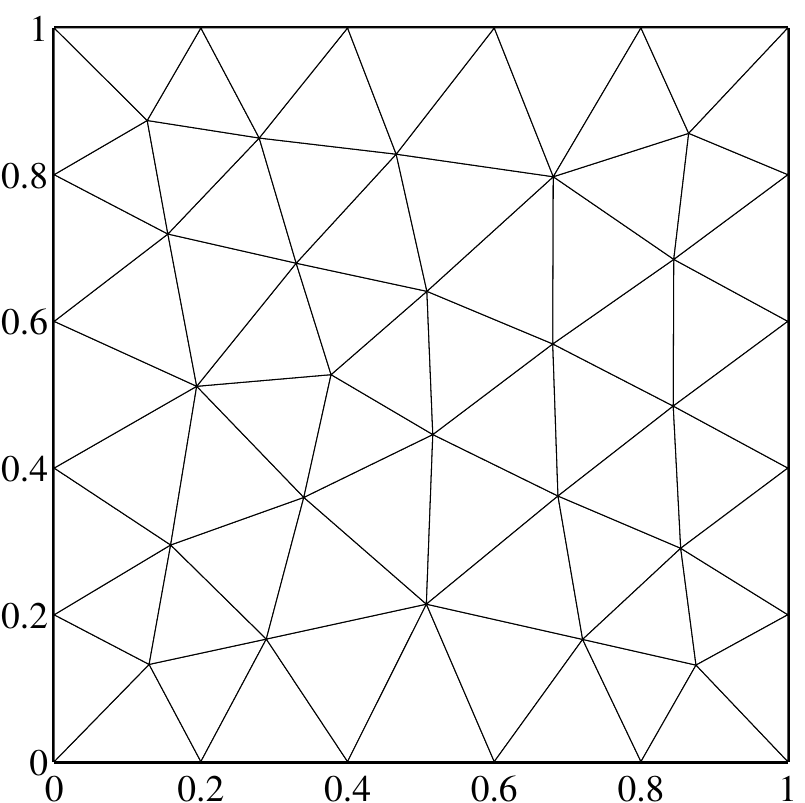}}
  \qquad
  \begin{tabular}{lrrrr}
    \toprule
    \multicolumn{5}{c}{\bf Unstructured triangular meshes} \\
    \cmidrule(lr){1-5}
    $h$ & $ L^1$ error & Order & 
    $ L^\infty$ error & Order \\ \midrule		
    $1 / 5$ & 3.40E-01 & --- & 8.24E-01 & --- \\ 
    $1 / 10$ & 1.05E-01 & 1.69 & 2.41E-01 & 1.78 \\ 
    $1 / 20$ & 1.58E-02 & 2.73 & 3.85E-02 & 2.64 \\ 
    $1 / 40$ & 2.05E-03 & 2.95 & 5.63E-03 & 2.77 \\ 
    $1 / 80$ & 2.59E-04 & 2.99 & 9.84E-04 & 2.52 \\ 
    $1/160$ & 3.25E-05 & 2.99 & 2.00E-04 & 2.30 \\
    \bottomrule  
  \end{tabular}	
\end{figure}

\subsection{Composite model problem}

In this example we use a composite model problem to test the robustness of the
proposed scheme.  The following linear advection equation is computed
\[
u_t+u_x+0u_y=0,
\]
on the square domain $[0,1]\times [0,1]$.  The initial profile consists of two
bulks with large discrepancy in magnitude, i.e.
\[
u(x,y,0) =
\begin{cases}
A, &  1/8 \le x \le 3/8 \text{~and~} 3/8\le y\le 5/8,\\
100, & 5/8 \le x \le 7/8 \text{~and~} 3/8\le y\le 5/8,\\
0, & \text{Otherwise}.
\end{cases}
\]
where $A=1$ or $10$ denotes the magnitude of the short bulk.

The solution of IQR and SQR schemes on a fine structured mesh at $t=0.1$ are
shown in Fig. \ref{fig:compositemodel}. Only the short bulk is displayed here
for the sake of visibility. We would like to mention that the global range
$[0,100]$ of the initial solution is required as a parameter in the SQR
scheme. It is observed that the SQR scheme produces severe oscillation on the
front edge of the short bulk, though the global range of the solution is
strictly preserved within the interval $[0,100]$. To quantitatively study the
behavior of the short bulk when the mesh is refined, we measure the peak value
of the short bulk on successively refined meshes, as is shown in Table
\ref{tab:peakv}. It is observed that the amount of the overshoot is about $11\%$
the magnitude of the short bulk for the SQR scheme, regardless of the resolution
of the numerical scheme. Indeed, due to the presence of the tall bulk, the
scaling limiter makes no difference to the edge of the short bulk.  On the other
hand, the IQR scheme maintains the range of the short bulk due to its locality
and parameter-free performance. From this perspective we confirm the robustness
of the IQR scheme.

\begin{figure}[htbp]
    \centering
    \begin{minipage}[b]{.25\textwidth}
    		\tdplotsetmaincoords{75}{-148}
    		\begin{tikzpicture}[tdplot_main_coords,scale=3]
    		\def\SHORT{0.012}
    		\def\TALL{1.2}
    		\filldraw[draw =  gray,fill = gray!70] (0,0,0)--(0,1,0)--(1,1,0)--(1,0,0)--cycle;
    		\coordinate (A1) at (1/8,3/8,0);
    		\coordinate (B1) at (1/8,5/8,0);
    		\coordinate (C1) at (3/8,5/8,0);
    		\coordinate (D1) at (3/8,3/8,0);
    		\coordinate (E1) at (1/8,3/8,\SHORT);
    		\coordinate (F1) at (1/8,5/8,\SHORT);
    		\coordinate (G1) at (3/8,5/8,\SHORT);
    		\coordinate (H1) at (3/8,3/8,\SHORT);
    		\filldraw [draw =gray,fill = gray!70] (E1)--(F1)--(G1)--(H1)--cycle;
    		\filldraw [draw =gray,fill = gray!50] (A1)--(B1)--(F1)--(E1)--cycle;
    		\filldraw [draw =gray,fill = gray!40] (B1)--(C1)--(G1)--(F1)--cycle;
    		\coordinate (A2) at (5/8,3/8,0);
    		\coordinate (B2) at (5/8,5/8,0);
    		\coordinate (C2) at (7/8,5/8,0);
    		\coordinate (D2) at (7/8,3/8,0);
    		\coordinate (E2) at (5/8,3/8,\TALL);
    		\coordinate (F2) at (5/8,5/8,\TALL);
    		\coordinate (G2) at (7/8,5/8,\TALL);
    		\coordinate (H2) at (7/8,3/8,\TALL);
    		\filldraw [draw =gray,fill = gray!70] (E2)--(F2)--(G2)--(H2)--cycle;
    		\filldraw [draw =gray,fill = gray!50] (A2)--(B2)--(F2)--(E2)--cycle;
    		\filldraw [draw =gray,fill = gray!40] (B2)--(C2)--(G2)--(F2)--cycle;
    		\fill [fill=white] (0.3,0.8)--++(0,-0.1)--++(0.2,0)--++(0,-0.07)--++(0.12,0.12)--++(-0.12,0.12)
    		--++(0,-0.07)--++(-0.2,0)--cycle;
    		\end{tikzpicture}
    		\caption{Compositor Model Problem}\label{fig:compositedia}
    \end{minipage}
    \quad
    \begin{minipage}[b]{.7\textwidth}
    \centering
    \subfloat[IQR]{\includegraphics[width=.45\textwidth]{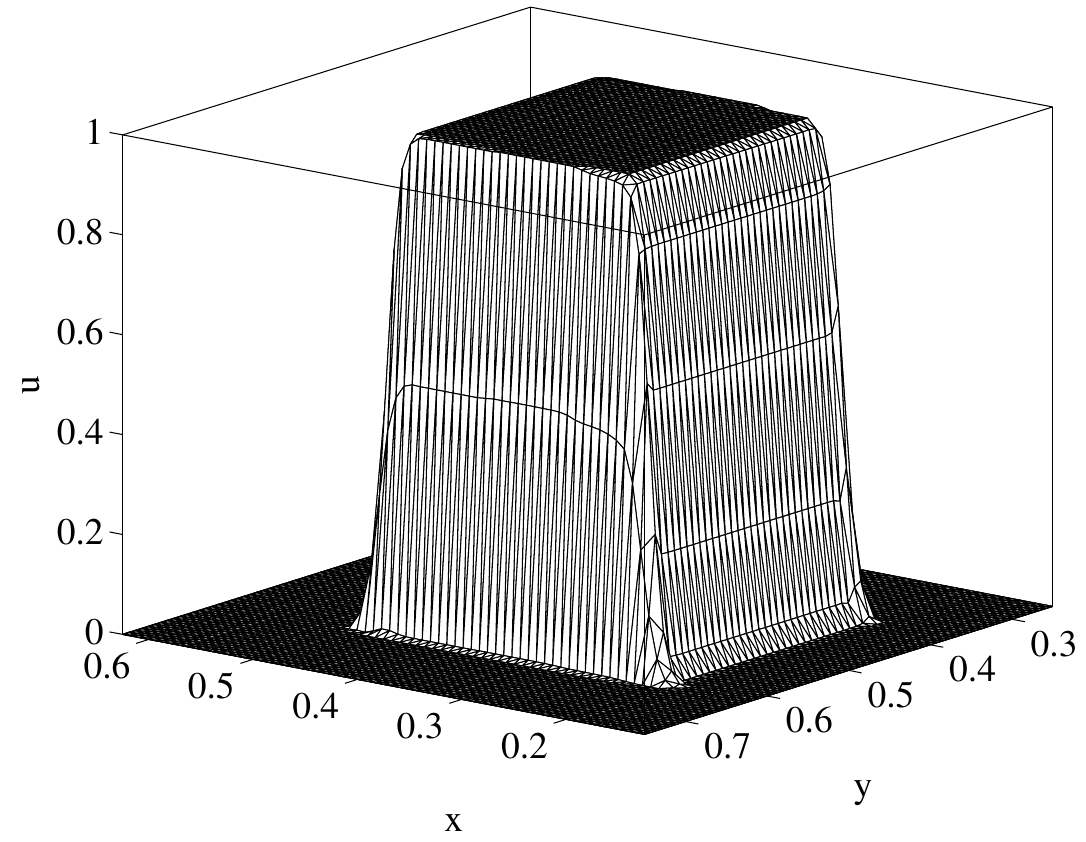}}
    \subfloat[SQR]{\includegraphics[width=.45\textwidth]{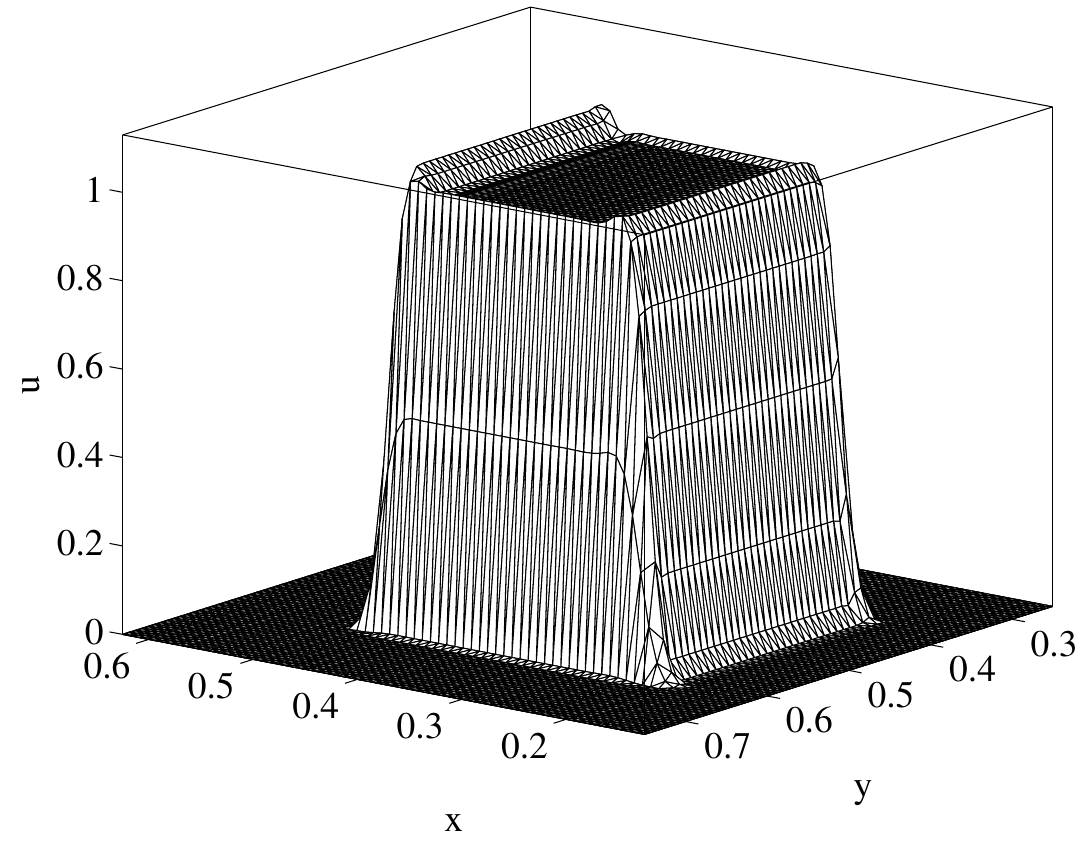}}
    \caption{Shape of the short bulk at $t=0.1$ with $128\times 128\times 2$ cells.}
    \label{fig:compositemodel}
    \end{minipage}
\end{figure}

\begin{table}[htbp]
	\centering
	\caption{Peak value of the short bulk at $t=0.1$ on successively refined meshes.}
  \label{tab:peakv}
\begin{tabular}{lrrrrr}
	\toprule
	& \multicolumn{2}{c}{$A=1$} &
	& \multicolumn{2}{c}{$A=10$} \\
    \cmidrule(lr){2-3} \cmidrule(lr){5-6}
	$h$   &  \bf SQR   &  \bf IQR   &&  \bf SQR  &  \bf IQR  \\
    \midrule 
	$1/16$ &  1.1441  &  1.0000  &&  11.4405 & 10.0000 \\
	$1/32$ &  1.1535  &  1.0000  &&  11.5353 & 10.0000 \\
	$1/64$ &  1.1268  &  1.0000  &&  11.2681 & 10.0000 \\
	$1/128$ & 1.1298  &  1.0000  &&  11.2984 & 10.0000 \\
	\bottomrule
\end{tabular}
\end{table}

\subsection{Solid body rotation problem}

This is a non-uniform scalar flow where the initial profile consists of smooth
hump, cone and slotted cylinder.  See \cite{LeVeque1996} for the algebraic
descriptions of the geometric shapes.  We solve the circular advection equation
\[
u_t-(y-0.5)u_x+(x-0.5)u_y=0,
\]
on $[0,1]\times[0,1]$ with homogeneous boundary conditions.  Fig. \ref{fig:sbr}
shows the results after one revolution on two levels of Delaunay meshes.  In the
finer mesh, the IQR scheme almost keeps the shape of the initial solution
without much distortion.

\begin{figure}[htbp]
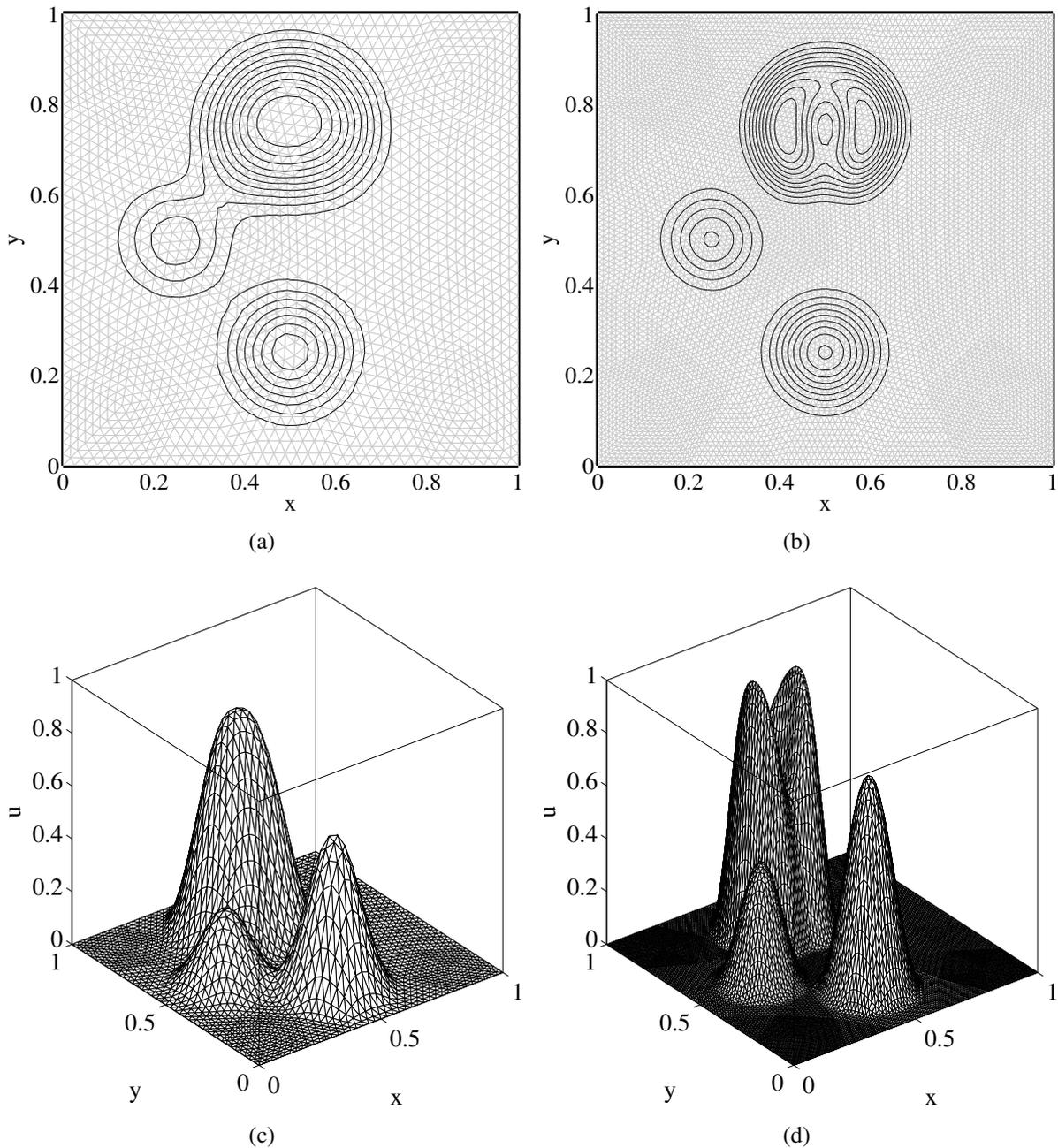

	\subfloat[]{\includegraphics[width=.5\textwidth]
		{sbr_4096.pdf}}
	\subfloat[]{\includegraphics[width=.5\textwidth]
		{sbr_16384.pdf}}\\
	\subfloat[]{\includegraphics[width=.5\textwidth]
		{sbr_4096_surf.pdf}}	
	\subfloat[]{\includegraphics[width=.5\textwidth]
		{sbr_16384_surf.pdf}}	
	\caption{Solutions of the solid body rotation problem at $t=2\pi$: left panel 
		(4096 cells) and right panel (16384 cells).}
	\label{fig:sbr}
\end{figure}

\subsection{Two-dimensional Burgers' equation}

Following \cite{Hu1999}, we consider the two-dimensional Burgers' equation 
\begin{equation}
u_t+\left(\dfrac{1}{2}u^2\right)_x+\left(\dfrac{1}{2}u^2\right)_y=0,
\label{eq:burgers}
\end{equation}
with initial condition $u(x,y,0)=0.3+0.7\sin({\pi}(x+y)/2)$ on the domain
$[-2,2]\times [-2,2]$.  Periodic boundary conditions are applied.  To assess the
order of accuracy on smooth regions we advance the solution until $t=0.5/\pi^2$.
The exact solution at a given position $(x,y)$ can be found by applying a
fixed-point iteration to the nonlinear algebraic equation
\[
u=0.3+0.7\sin\left(\dfrac{\pi}{2}(x+y)-\dfrac{u}{2\pi}\right).
\]
The underlying meshes we use here are the same as that of Figs. 3.2 and 3.3 in
Hu and Shu \cite{Hu1999}.  The accuracy results of both IQR and SQR schemes are
listed in Tables \ref{tab:cvgBurgersstr} and \ref{tab:cvgBurgersunstr}.  A
third-order accuracy is maintained for both structured and unstructured meshes.

\begin{table}[htbp]
  \caption{Accuracy of 2D Burgers' equation at $t=0.5/\pi^2$ on structured grids}
  \label{tab:cvgBurgersstr}
  \centering
  \begin{tabular}{lrrrrrrrrr}
      \toprule
     \multicolumn{5}{c}{\bf IQR} 
     && \multicolumn{4}{c}{\bf SQR}\\
     \cmidrule(lr){2-5}\cmidrule(lr){6-10}
     $h$ & $ L^1$ error & Order & 
     $ L^\infty$ error & Order &
     & $ L^1$ error & Order & $ L^\infty$ error
     & Order \\ \midrule		
      $2/5$ & 3.38E-01 & --- & 6.63E-02 & --- && 4.16E-01 & --- & 6.72E-02 & --- \\ 
      $1/5$ & 4.54E-02 & 2.89 & 9.89E-03 & 2.75 && 4.54E-02 & 3.20 & 9.89E-03 & 2.76 \\ 
      $1/10$ & 5.73E-03 & 2.99 & 1.31E-03 & 2.92 && 5.73E-03 & 2.99 & 1.31E-03 & 2.92 \\ 
      $1/20$ & 7.21E-04 & 2.99 & 1.67E-04 & 2.98 && 7.21E-04 & 2.99 & 1.67E-04 & 2.98 \\ 
      $1/40$ & 9.14E-05 & 2.98 & 2.09E-05 & 2.99 && 9.01E-05 & 3.00 & 2.09E-05 & 2.99 \\ \bottomrule
  \end{tabular}
\end{table}

\begin{table}[htbp]
  \caption{Accuracy of 2D Burgers' equation at $t=0.5/\pi^2$ on unstructured grids}
  \label{tab:cvgBurgersunstr}
  \centering
  \begin{tabular}{lrrrrrrrrr}
    \toprule
    \multicolumn{5}{c}{\bf IQR} 
    && \multicolumn{4}{c}{\bf SQR}\\
    \cmidrule(lr){2-5}\cmidrule(lr){6-10}
    $h$ & $ L^1$ error & Order & 
    $ L^\infty$ error & Order && $ L^1$ error & Order & $ L^\infty$ error
    & Order \\ \midrule		
    $1/2$ & 2.28E-01 & --- & 7.59E-02 & --- &&  2.27E-01 &  --- &  7.59E-02	 &  --- \\ 
    $1/4$ & 3.02E-02 & 2.92 & 1.18E-02 & 2.68 &&  3.01E-02 &	2.91&	1.18E-02&	2.68 \\ 
    $1/8$ & 3.81E-03 & 2.99 & 1.66E-03 & 2.84  &&  3.80E-03&	2.98	&1.66E-03	&2.84\\ 
    $1/16$ & 4.80E-04 & 2.99 & 2.19E-04 & 2.92 && 4.76E-04	& 3.00	& 2.19E-04	& 2.92 \\ 
    $1/32$ & 6.08E-05 & 2.98 & 2.82E-05 & 2.96 && 5.94E-05&	3.00&	2.82E-05	&2.96 \\ 
    \bottomrule
  \end{tabular}
\end{table}

To demonstrate the application for shock computations we compute until
$t=5/\pi^2$.  Fig. \ref{fig:Burgerssurf} shows the results for a structured mesh
with $h=1/20$ and an unstructured mesh with $h=1/16$, following the resolution
that was used in \cite{Hu1999}.  From here we can see that the shock front,
located on $x+y=3/\pi^2\pm 2$, is captured well. 
\begin{figure}[htbp]
	\subfloat[Structured mesh: $h=1/20$]
  {\includegraphics[width=.5\textwidth] {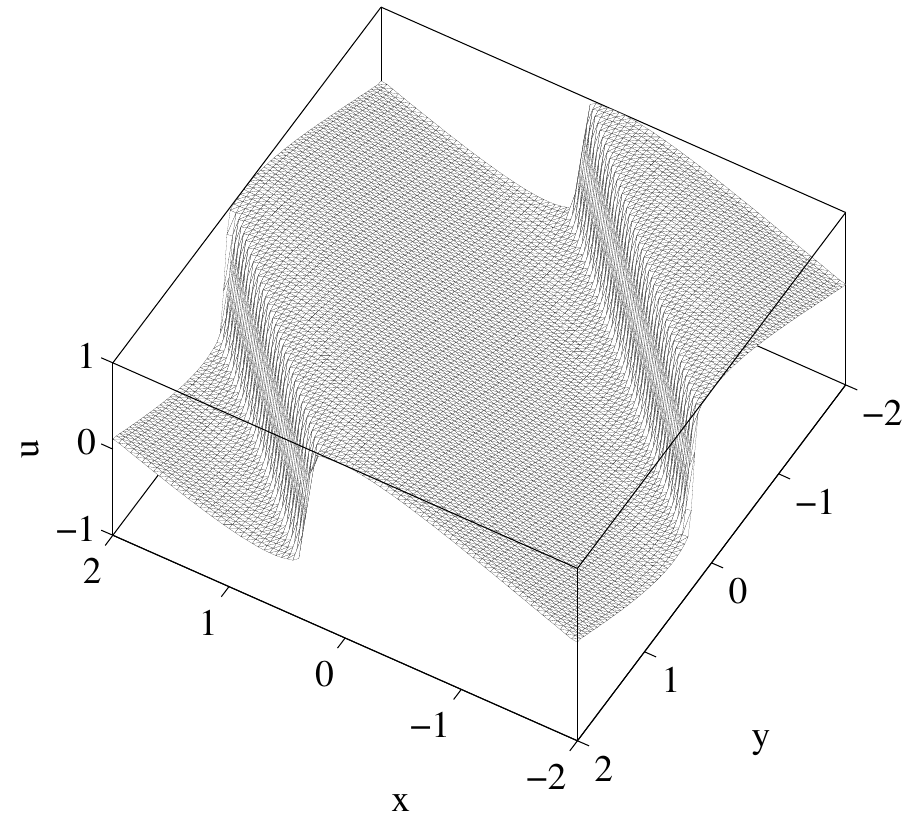}}
	\subfloat[Unstructured mesh: $h=1/16$]
  {\includegraphics[width=.5\textwidth] {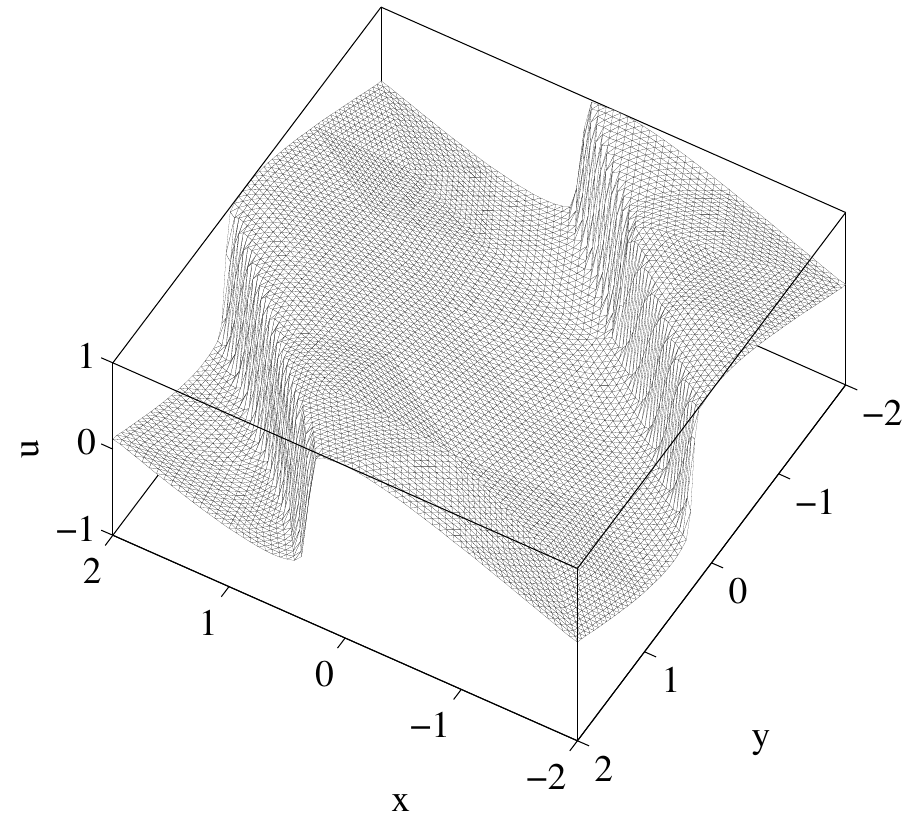}}
	\caption{Solution of 2D Burgers' equation at $t=5/\pi^2$.}
	\label{fig:Burgerssurf}
\end{figure}

\subsection{Two-dimensional Riemann problem}

To investigate the performance of IQR scheme in the presence of genuinely
multi-dimensional nonlinear waves, we solve the Riemann problem
\cite{Christov2008} of Burgers' equations \eqref{eq:burgers}
\[
u(x,y,0) =
\begin{cases}
2, & x,y<0.25,\\
3, & x,y>0.25,\\
1, & \text{Otherwise},
\end{cases}
\]
on the domain $[0,1]\times[0,1]$.  Inflow boundary conditions are prescribed at
the left and bottom edges of the boundary to mimic the motion of shocks.  Two
shock waves and two rarefactions will meet towards the center of the domain to
form a double-parabola-shaped cusp.  In our computation the solution is advanced
to $t=1/12$.  The exact solution can be found by using the method of
characteristics, i.e.
\[
u(x,y,t)=
\begin{cases}
3, & \min\{x,y\}>0.25+3t,\\
(\min\{x,y\}\!-\!0.25)/t, & 0.25+2t-\min\{\sqrt{2|x-y|t},t\}\le \min\{x,y\} 
\le 0.25+3t, \\
1, &  \min\{x,y\}<0.25+t \le 0.25+1.5t < \max\{x,y\}, \\
2, & \text{Otherwise}.
\end{cases}
\]

On successively refined Delaunay meshes, the $L^1$ errors at $t=1/12$
are shown in Fig. \ref{fig:riemann}(a), with an order of accuracy close to one,
which confirms that IQR scheme is genuinely high-order.  The contour lines of
the solution on a fine mesh are shown in Fig. \ref{fig:riemann}(b).  The result
exhibits high resolution for the central cusp and shock front.

\begin{figure}[htbp]
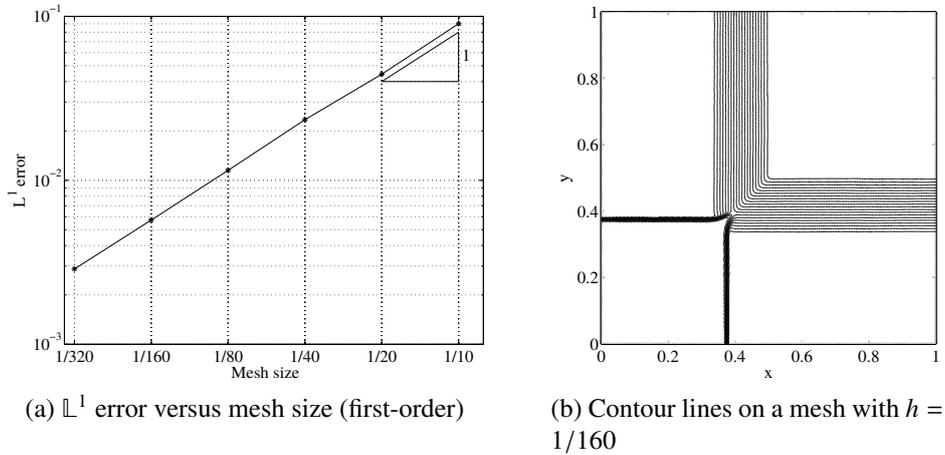

	\centering
	\subfloat[$ L^1$ error versus mesh size (first-order)]
	{\includegraphics[height=58mm]{BRloglog.pdf}}
	\qquad
  \subfloat[Contour lines on the mesh $h=1/160$]
  {\includegraphics[height=58mm]{BR_65536_cont.pdf}}
	\caption{Riemann problem of Burgers' equation at $t=1/12$.}
	\label{fig:riemann}
\end{figure}

\subsection{Three-dimensional linear equation}
This is a three-dimensional problem used to assess the order of accuracy.  We
solve the following linear equation
\[
u_t+u_x+u_y+u_z=0,
\]
with initial profile given by the triple sine wave function 
\[
u(x,y,z,0)=\sin(2\pi x)\sin(2\pi y)\sin(2\pi z).
\]
The computation domain is $[0,1]\times[0,1]\times[0,1]$. 
We perform the convergence test on unstructured tetrahedron mesh 
with periodic boundary condition applied. In Table \ref{tab:3DLinear}, 
one observes third-order of accuracy on such kind mesh.
\begin{table}[htbp]
  \centering
  \caption{Accuracy for 3D linear equation.}
  \label{tab:3DLinear}
  \begin{tabular}{lrrrr}
    \toprule
    $h$ & $ L^1$ error & Order & 
    $ L^\infty$ error & Order \\ \midrule
    1/10 & 1.05E-01 & --- & 7.59E-01 & --- \\ 
    1/20 & 4.47E-02& 2.03 & 1.86E-01& 2.02 \\ 
    1/40 & 6.69E-03& 2.66 & 2.61E-02& 2.67 \\ 
    1/80 & 8.51E-04& 2.80 & 3.46E-03& 2.75 \\ 
    \bottomrule
  \end{tabular}
\end{table}

\subsection{Three-dimensional Burgers' equation}

In this last test we compute the three-dimensional Burgers' equation \cite{Zhang2009}
\[
		u_t + \left(\dfrac{1}{2}u^2\right)_x + 
			\left(\dfrac{1}{2}u^2\right)_y + 
			\left(\dfrac{1}{2}u^2\right)_z = 0,
\]
with initial data $u(x,y,z,0) = 0.3+0.7\sin({\pi}(x+y+z)/3)$ on the cube domain
$[-3,3]\times[-3,3]\times[-3,3]$ with periodic boundary conditions.  The CFL
number is taken as $0.1$ here.  The convergence order at $t = 0.5/ \pi^2$ is
listed in Table \ref{tab:3DBurgers}, where full accuracy is observed.  We also
present the contour plots of the solution at $t=5/ \pi^2$ on the surface and the
2D slice $z = 0$ as well as the 1D cutting-plot along the line $x = y, z = 0$ in
Fig \ref{fig:Burgers3d}.  We can observe that the solution is non-oscillatory
and the shock is resolved sharply.
\begin{table}[htbp]
  \centering
  \caption{Accuracy for 3D Burgers' equation.}
  \label{tab:3DBurgers}
  \begin{tabular}{lrrrr}
    \toprule
    $h$ & $ L^1$ error & Order & 
    $ L^\infty$ error & Order \\ \midrule
    $3/4$ & 1.42E+01 & --- & 2.88E-01 & --- \\ 
    $3/8$ & 2.17E+00 & 2.71 & 5.32E-02 & 2.44 \\ 
    $3/16$ & 2.85E-01 & 2.93 & 7.61E-03 & 2.81 \\ 
    $3/32$ & 3.59E-02 & 3.00 & 9.95E-04 & 2.94 \\ 
    \bottomrule
  \end{tabular}
\end{table}

\begin{figure}[htbp]
	\subfloat[Solution on the surface]
  {\includegraphics[height=40mm] {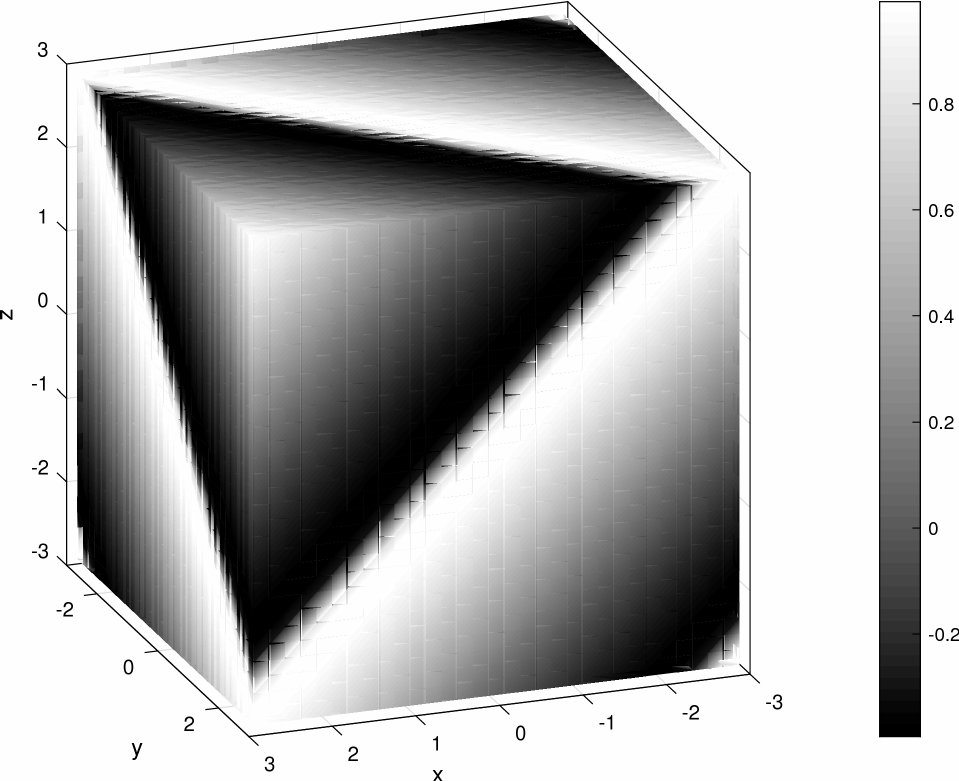}}
	\quad
	\subfloat[Solution on the slice $z=0$]
  {\includegraphics[height=40mm] {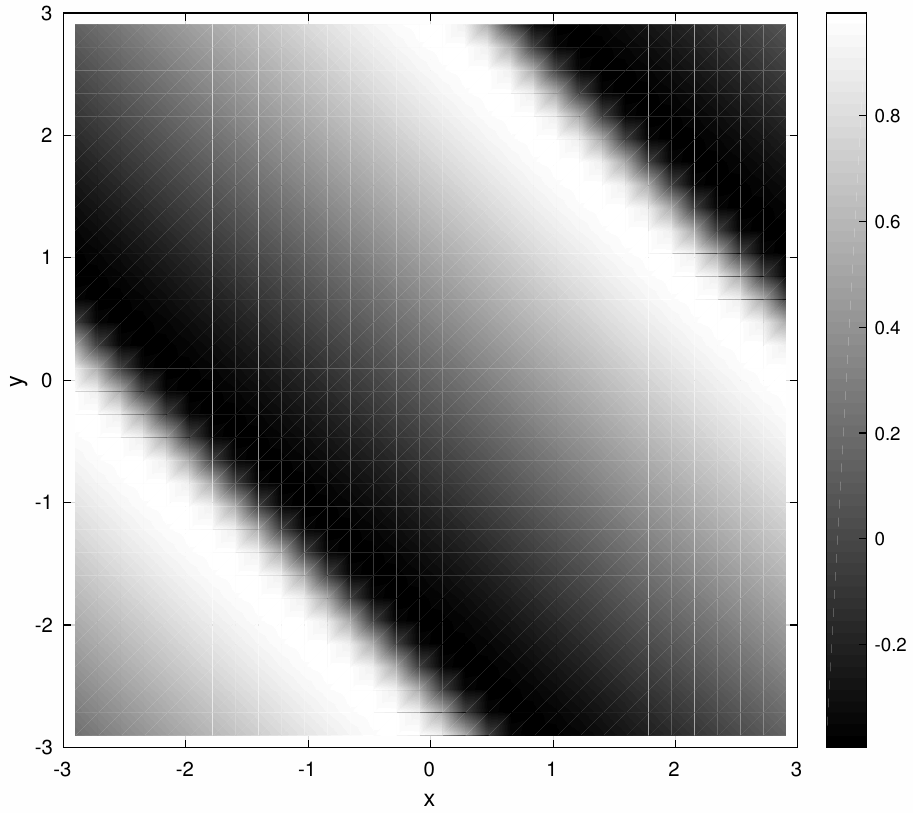}}
	\quad
	\subfloat[Cutting-plot along $x=y$, $z=0$]
  {\includegraphics[height=40mm] {burgers_3d_exact.pdf}}
	\caption{Three-dimensional Burgers' equation at 
	$t=5/ \pi^2$ with $32\times 32\times 32$ cells.}
	\label{fig:Burgers3d}
\end{figure}

We remark that even a third-order accuracy is observed in the
numerical examples, {\bf Theorem 1} does not guarantee the numerical
solution a third-order accuracy since it depends on if the upper and
the lower bound in the reconstruction have the same accuracy
order. For multiple dimensional problems, it looks the third-order
accuracy able to be preserved during the evolving of the scheme, while
it is harder to preserve the accuracy order for one dimensional
problem since the maximal value can be available at only one point.


\section{Conclusion}

We proposed an integrated quadratic reconstruction method for
high-order finite volume schemes to scalar conservation laws in
multiple dimensions. The reconstruction is applicable on flexible
grids and requires no problem dependent parameters.  Moreover, it
gives us a finite volume scheme which satisfies a local maximum
principle. Numerical results showed the accuracy and robustness of the
proposed scheme. In the future, we are interested in
how to extend the reconstruction strategy here to systems of
conservation laws.


\section*{Acknowledgements}

The authors appreciate the financial supports by 
the Science Challenge Project (No. TZ2016002), 
the National Natural Science Foundation of China (Grant No. 11971041).

\section*{\refname}
\bibliographystyle{unsrt}
\bibliography{refs}
\end{document}